\def\R{\mathbb R}
\def\C{\mathbb C}
\def\restr{\negthickspace \mid}
\def\di{\partial}
\def\&{\wedge}
\def\ri{\mathrm i}
\def\ihat{\hat\imath}
\def\jhat{\hat\jmath}
\def\khat{\hat{k}}
\def\Ad{\operatorname{Ad}}
\def\w{\omega}
\def\su{\mathfrak{su}}
\def\h{\mathfrak{h}}
\def\k{\mathfrak{k}}
\def\Lt{\widetilde{\Lambda}}
 \newtheorem{thm}{Theorem}[section]
 \newtheorem{cor}[thm]{Corollary}
 \newtheorem{lem}[thm]{Lemma}
 \newtheorem{prop}[thm]{Proposition}
 \theoremstyle{definition}
 \newtheorem{defn}[thm]{Definition}
 \theoremstyle{remark}
 \newtheorem{rem}[thm]{Remark}
 \numberwithin{equation}{section}
\def\FL{F^\lambda}
\begin{document}
\title[Symmetric Pseudospherical Surfaces I]{Symmetric Pseudospherical Surfaces I:\\ General Theory}
\author[Dorfmeister]{Josef F. Dorfmeister}
\address{Zentrum Mathematik\\
Technische Universit\"{a}t M\"{u}nchen\\
D-85747 Garching bei M\"{u}nchen\\ Germany} 
\email{dorfm@ma.tum.de} 
\author[Ivey]{Thomas Ivey}
\address{Department of Mathematics\\
College of Charleston\\
Charleston, SC  29424\\ USA}
\email{iveyt@cofc.edu} 
\author[Sterling]{Ivan Sterling}
\address{Mathematics and Computer Science Department\\ St Mary's College of Maryland\\ St Mary's City, MD 20686-3001\\ USA}
\email{isterling@smcm.edu} 
\subjclass{Primary 53A05; Secondary 35Q53,  22E68} 
\keywords{pseudospherical surfaces; sine-Gordon equation; loop groups}
\date{February 28, 2009}
\dedicatory{In Memory of Katsumi Nomizu}

\begin{abstract}
We apply the loop group
method developed by Zakharov-Shabat \cite{ZS}, Terng-Uhlenbeck \cite{TU} and Toda \cite{T} to the study of symmetries of pseudospherical surfaces (ps-surfaces) in $\mathbb{R}^3$.  
In this paper (part I) we consider the general theory, while in a second paper (part II) we will study special cases. 
\end{abstract}
\maketitle

\section{Introduction}
In this paper, we study symmetries of pseudospherical surfaces (ps-surfaces, i.e. surfaces with Gauss curvature $K=-1$) in $\mathbb{R}^3$ via the loop group method developed by Zakharov-Shabat \cite{ZS}, Terng-Uhlenbeck \cite{TU} and Toda \cite{T}.   
One of the motivations for studying symmetries is to develop a theory for non-finite-type ps-surfaces.  
(A rather complete investigation of ps-surfaces of finite-type is Melko-Sterling \cite{MS}.)  
In particular, using methods in Part I we will exhibit examples with discrete rotational symmetry about an axis.  These examples contain points which have properties similar to umbilic points.  We believe these examples will help to develop a theory of ps-surfaces of non-finite type.  

In \S\ref{sec:lgps} of the paper we review the main results of Toda's algorithm as it has been used computationally for several years.  First, we
discuss the 1:1 correspondence (up to rigid motions) between ps-surfaces parametrized by asymptotic lines and pairs of \textit{normalized potentials}. 
In preparation for understanding the relationships between symmetries at various levels, we review the precise correspondence between four levels of
description for a ps-surface: the immersion $f:D \to \mathbb{R}^3$
itself, the extended orthonormal frame $F$, the extended $SU(2)$-valued
frame $U$, and the normalized potential pair.  The main results in this section are the construction of normalized potentials in \eqref{duplusx} and
\eqref{duminusy}, and the converse in Theorem \ref{converse}.
We also address the 
questions of uniqueness and differentiability, and introduce generalized potentials.

In \S\ref{sec:startsym} we study symmetries of ps-surfaces, frames and potentials.  Our study is similar to that of Dorfmeister and Haak's study of symmetries of constant mean curvature surfaces \cite{DH1},\cite{DH2}.  Our basic assumption is that there is a rigid motion $R:\R^3 \to \R^3$ and a diffeomorphism
$\gamma:D \to D$ such that
\begin{equation*}
f \circ \gamma = R \circ f.
\end{equation*}
In particular we address the issue of how group actions on the surfaces relate to group actions on the space of general potentials.  The main results in this section are Propositions \ref{SeppsPropA} and \ref{SeppsPropB}.

In the second paper \cite{DIS2} we will study special cases, both old and new, including symmetries via the fundamental group or rotational invariance.  New examples include several with discrete rotational symmetry.  One such example is shown in Figures 1 and 2 of this paper.
\section{Loop Groups and Pseudospherical Surfaces} \label{sec:lgps}

Here, we want to summarize the loop group method for constructing ps-surfaces.  We aim to give a compact exposition, so some proofs will be omitted. 
 
\subsection{Ps-Surface to Darboux Frame}\label{ps2Darboux}
 We begin by reviewing some well-known facts about ps-surfaces, beginning with the fact that the asymptotic lines form a Chebyshev net, and the angle between them satisfies the sine-Gordon equation:

\begin{thm} Let $f:D \to \R^3$ be an oriented immersed ps-surface.  Near any point of $D$, there are coordinates $x$ and $y$ such that $\di f/\di x$ and $\di f/\di y$ are unit vectors and asymptotic directions, and  $\di f/\di x\times \di f/\di y$ agrees with the orientation. Then the counterclockwise angle $\phi$ from $\di f/\di x$ to $\di f/\di y$ satisfies the sine-Gordon equation
\begin{equation}\label{SGE}
\phi_{xy} = \sin \phi.
\end{equation}
Let $\theta=\phi/2$.
If we define the Darboux frame\footnote{That is, a moving frame along the surface where the first two  vectors are principal directions.} (see figure below)
$$e_1 = \tfrac12 \sec{(\theta)} (\di f/\di x + \di f/\di y), \quad
e_2 = \tfrac12 \csc{(\theta)}(\di f/\di y-\di f/\di x), \quad
e_3 = e_1 \times e_2,
$$
then the orthogonal matrix $\widetilde{F}$ whose columns are $e_1, e_2, e_3$ satisfies
\begin{equation}\label{so3sys}
\dfrac{\di \widetilde{F}}{\di x} =  \widetilde{F} \begin{bmatrix}
0 & \theta_x & -\sin \theta \\
-\theta_x & 0 & -\cos \theta \\
\sin \theta & \cos \theta & 0
\end{bmatrix},\qquad
\dfrac{\di \widetilde{F}}{\di y} = \widetilde{F} \begin{bmatrix}
0 & -\theta_y & -\sin \theta \\
\theta_y & 0 & \cos \theta \\
\sin \theta & -\cos \theta & 0
\end{bmatrix}.
\end{equation}
\end{thm}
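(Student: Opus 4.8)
The plan is to treat the statement as three linked classical facts proved in sequence: the existence of an asymptotic Chebyshev net, the sine-Gordon equation \eqref{SGE}, and the structure equations \eqref{so3sys}. Since $K=-1<0$, every point is hyperbolic and carries two distinct real asymptotic directions, which span two transverse foliations; choosing these as coordinate curves gives local coordinates $(x,y)$ in which the second fundamental form satisfies $L=N=0$, so that $II=2M\,dx\,dy$ and $K=-M^2/(EG-F^2)=-1$ forces $M^2=EG-F^2$. To upgrade this to a Chebyshev net I would feed $L=N=0$ into the Mainardi--Codazzi equations and combine them with the contracted identity $\Gamma^1_{11}+\Gamma^2_{12}=(\log W)_x$, where $W=\sqrt{EG-F^2}$; this forces $\Gamma^1_{12}=\Gamma^2_{12}=0$, which is readily seen to be equivalent to $E_y=G_x=0$. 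Reparametrizing each family of asymptotic curves by arc length then normalizes $E=G=1$, so $F=\langle \di f/\di x,\di f/\di y\rangle=\cos\phi$ with $\phi$ the net angle, and $W=\sin\phi=M$ (the sign of $M$ being fixed by the stated orientation).

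With the Chebyshev metric $ds^2=dx^2+2\cos\phi\,dx\,dy+dy^2$ in hand, equation \eqref{SGE} is extracted from the Gauss equation. Using the Christoffel symbols of this metric (most vanish; $\Gamma^1_{11}=\phi_x\cot\phi$, $\Gamma^2_{22}=\phi_y\cot\phi$, $\Gamma^2_{11}=-\phi_x/\sin\phi$), the curvature formula collapses, after cancellation of the $\phi_x\phi_y$ terms, to $K=-\phi_{xy}/\sin\phi$; setting $K=-1$ is exactly $\phi_{xy}=\sin\phi$.

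For the frame equations I would first confirm that $\{e_1,e_2,e_3\}$ is a positively oriented orthonormal frame. The half-angle identities $1+\cos\phi=2\cos^2\theta$ and $1-\cos\phi=2\sin^2\theta$ give $|e_1|=|e_2|=1$, while $\langle e_1,e_2\rangle\propto(|\di f/\di y|^2-|\di f/\di x|^2)=0$; and a short cross-product computation using $\sec\theta\csc\theta=2/\sin\phi$ identifies $e_3=(\di f/\di x\times\di f/\di y)/\sin\phi=n$, the unit normal. Consequently $\Ft\in SO(3)$, so $\Ft^{-1}\Ft_x$ and $\Ft^{-1}\Ft_y$ are automatically skew and only the three entries above the diagonal in each must be checked. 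The entries coupling the tangent plane to the normal are immediate from the second fundamental form: for instance $\langle \di e_1/\di x,e_3\rangle=\tfrac12\sec\theta\,(L+M)=\tfrac12\sec\theta\sin\phi=\sin\theta$ and $\langle \di e_2/\di x,e_3\rangle=\tfrac12\csc\theta\,(M-L)=\cos\theta$, using $L=N=0$, $M=\sin\phi$ and $\sin\phi=2\sin\theta\cos\theta$; these give the bottom row $(\sin\theta,\cos\theta,0)$ of the first matrix, and the analogous $y$-computation gives that of the second.

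The main obstacle is the single tangential entry in each matrix, e.g.\ the $(2,1)$ entry $\langle \di e_1/\di x,e_2\rangle$, since this requires the Christoffel symbols rather than the second fundamental form. I would differentiate $e_1=\tfrac12\sec\theta\,(\di f/\di x+\di f/\di y)$, discard the normal part, and project the tangential part onto $e_2$ using $\langle \di f/\di x,e_2\rangle=-\sin\theta$ and $\langle \di f/\di y,e_2\rangle=\sin\theta$; the $\theta_x$-term drops out because $\di f/\di x+\di f/\di y=2\cos\theta\,e_1\perp e_2$. What remains is $\tfrac12\tan\theta\,(-\Gamma^1_{11}-\Gamma^1_{12}+\Gamma^2_{11}+\Gamma^2_{12})$, and substituting the Christoffel values together with the half-angle identity $(1+\cos\phi)/\sin\phi=\cot\theta$ collapses this to $-\tfrac12\phi_x=-\theta_x$, matching the $(2,1)$ entry. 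The same computation with $y$-derivatives yields the second matrix. As a final consistency check one can verify that the integrability condition $\Ft_{xy}=\Ft_{yx}$, i.e.\ $A_y-B_x=[A,B]$ for the two coefficient matrices $A,B$ of \eqref{so3sys}, reproduces \eqref{SGE}, tying all three parts together.
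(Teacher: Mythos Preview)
Your proof is correct and complete. The paper itself does not actually prove this theorem; it simply writes ``For this calculation, see \S6.4 in \cite{IL}.'' That reference (Ivey--Landsberg, \emph{Cartan for Beginners}) derives these facts via Cartan's method of moving frames: one works directly with an adapted orthonormal coframe and the structure equations $d\omega^i=-\omega^i_j\wedge\omega^j$, so that the Codazzi and Gauss relations emerge from $d^2=0$ and the connection matrix in \eqref{so3sys} is read off from the coframe rather than assembled via Christoffel symbols. Your argument is the classical coordinate approach through the first and second fundamental forms and the Gauss--Codazzi equations. The moving-frame route is a bit slicker for obtaining \eqref{so3sys} directly, whereas your route makes the Chebyshev condition $E_y=G_x=0$ and the intrinsic derivation of $K=-\phi_{xy}/\sin\phi$ more transparent. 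Both reach the same conclusion; yours has the advantage of being fully self-contained here, where the paper offers only a citation.
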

For this calculation, see \S6.4 in \cite{IL}.

\begin{center}
\includegraphics[height=1.5in]{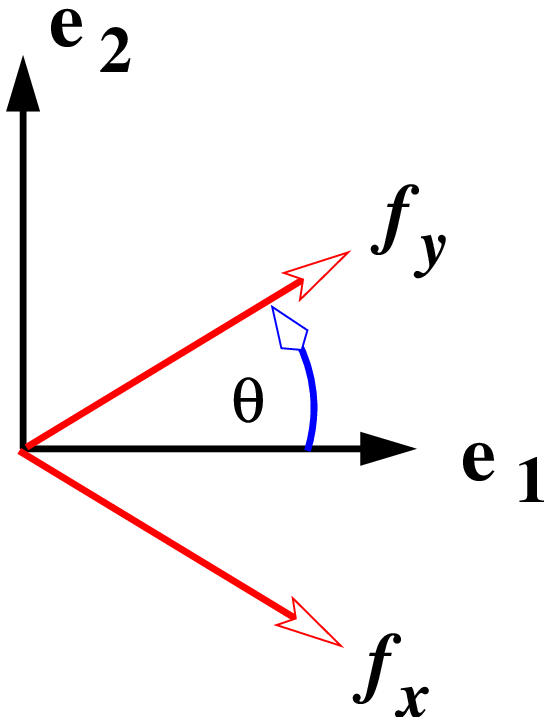}
\end{center}

We also have the converse:

\begin{thm}  Let $D$ be a simply connected open set in $\R^2$ and let $\widetilde{F}(x,y)$ be an $SO(3)$-valued function on $D$ that satisfies \eqref{so3sys} for some smooth function $\theta(x,y)$ on $D$.  Then $\phi=2\theta$ satisfies the sine-Gordon equation, and there is a map $f:D\to \R^3$ which is an immersion at points where $\sin\phi \ne0$, whose image is a ps-surface with Darboux frame given by the columns of $\widetilde{F}$.
\end{thm}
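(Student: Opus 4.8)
The plan is to reverse the construction of the first theorem: I recover $f$ by integrating the recipe that produced the Darboux frame, and I obtain the sine-Gordon equation as the integrability condition already built into \eqref{so3sys}.

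First I would extract \eqref{SGE}. Since $\Ft$ is a given smooth $SO(3)$-valued map, its mixed second partials agree. Writing \eqref{so3sys} as $\di\Ft/\di x = \Ft A$ and $\di\Ft/\di y = \Ft B$, differentiating the first in $y$ and the second in $x$, and equating, the invertibility of $\Ft$ forces the compatibility relation $\di A/\di y - \di B/\di x = AB - BA$. Because $A$ and $B$ are skew-symmetric, both sides correspond to vectors in $\R^3$ under the standard identification of skew matrices with vectors, the bracket corresponding to a cross product. Comparing the three components, two hold identically while the third reduces to $2\theta_{xy} = \sin 2\theta$; with $\phi = 2\theta$ this is exactly \eqref{SGE}.

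Next I would define $f$. Writing $e_1,e_2,e_3$ for the columns of $\Ft$ and inverting the frame formulas of the first theorem, I set
\[
\di f/\di x = \cos\theta\, e_1 - \sin\theta\, e_2, \qquad \di f/\di y = \cos\theta\, e_1 + \sin\theta\, e_2.
\]
The crux is to verify that this prescription is integrable, i.e. $\di_y(\di f/\di x) = \di_x(\di f/\di y)$. Reading the derivatives of the $e_i$ off the columns of \eqref{so3sys} and substituting, the $e_1$- and $e_2$-contributions cancel on each side and both collapse to $\sin\phi\, e_3$ (using $2\sin\theta\cos\theta = \sin\phi$). Since $D$ is simply connected, the Poincar\'e lemma then produces $f:D\to\R^3$, unique up to a translation, with these partial derivatives.

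Finally I would verify the geometry by direct differentiation. The first fundamental form comes out as $dx^2 + 2\cos\phi\,dx\,dy + dy^2$, so $\di f/\di x$ and $\di f/\di y$ are unit asymptotic vectors meeting at angle $\phi$, while $\di f/\di x \times \di f/\di y = \sin\phi\, e_3$ shows that $e_3$ is normal to the surface and that $f$ immerses exactly where $\sin\phi \ne 0$. One further differentiation gives the second fundamental form $2\sin\phi\,dx\,dy$, so the Gauss curvature equals $(0-\sin^2\phi)/(1-\cos^2\phi)=-1$ and the surface is pseudospherical, with Darboux frame the columns of $\Ft$ by construction. I expect no serious obstacle beyond bookkeeping; the one point needing care is that the inverse frame relations $e_1=\tfrac12\sec\theta(\cdots)$ and $e_2=\tfrac12\csc\theta(\cdots)$ degenerate where $\sin\phi=0$, yet the integrated equations for $\di f/\di x$ and $\di f/\di y$ remain smooth there, so $f$ is defined across such points even though it fails to be an immersion.
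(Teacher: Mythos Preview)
Your argument is correct and complete: the compatibility condition $A_y-B_x=[A,B]$ yields the sine-Gordon equation in its $(1,2)$ entry, your candidate 1-form for $df$ is closed (both mixed partials equal $\sin\phi\,e_3$), and the fundamental forms come out as stated, giving $K=-1$ and the Darboux frame $\Ft$.

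There is nothing in the paper to compare against: this theorem is one of the background results whose proof the authors explicitly omit (``we aim to give a compact exposition, so some proofs will be omitted''). Your approach---inverting the frame formulas of the preceding theorem, checking closedness, and reading off the fundamental forms---is the standard one and is precisely what the reference \cite{IL}, \S6.4 (cited for the forward direction) would have one do in reverse. The only cosmetic remark is that you need not invoke the degenerate inverse relations $e_1=\tfrac12\sec\theta(\cdots)$ at all; you simply \emph{define} $f_x$ and $f_y$ by the smooth right-hand sides, so your closing caveat, while true, is not really a live issue.
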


\subsection{Darboux Frame to Extended Frame}\label{Darboux2extended}
Rather than using a Darboux frame, it will be more convenient to use a frame that includes the unit vector $\di f/\di x$.  Accordingly, we let $F$ denote the frame obtained by rotating the first two vectors of the Darboux frame $\widetilde{F}$ through the clockwise angle $\theta$, so that
$$F = \widetilde{F} \begin{bmatrix} \cos\theta & \sin \theta & 0 \\ -\sin\theta & \cos\theta & 0 \\ 0 & 0 & 1\end{bmatrix}.$$
 Then
\begin{equation}\label{so3syst}
\dfrac{\di F}{\di x} =  F
\begin{bmatrix}
0 & \phi_x & 0 \\
-\phi_x & 0 & -1 \\
0 & 1 & 0
\end{bmatrix},\qquad
\dfrac{\di F}{\di y} = F
\begin{bmatrix}
0 & 0 & -\sin \phi \\
0 & 0 & \cos \phi \\
\sin \phi & -\cos \phi & 0
\end{bmatrix}.
\end{equation}

The sine-Gordon equation for $\phi$ is derived as the compatibility condition for the overdetermined system \eqref{so3syst}, by setting $\di (F_x)/\di y = \di (F_y)/\di x$.

The sine-Gordon equation is invariant under the 1-parameter group of Lie symmetry transformations of the form $T^\lambda(x,y) = (\lambda x, \lambda^{-1} y)$, $\lambda > 0$.  Hence,  $F^\lambda = F \circ T^\lambda$ will satisfy an overdetermined system with the same compatibility condition; in fact,
\begin{equation}\label{ftlsys}
\dfrac{\di F^\lambda}{\di x} = F^\lambda
\begin{bmatrix}
0 & \phi_x & 0 \\
-\phi_x & 0 & -\lambda \\
0 & \lambda & 0
\end{bmatrix} ,\ 
\dfrac{\di F^\lambda}{\di y} =\dfrac1\lambda F^\lambda
\begin{bmatrix}
0 & 0 & -\sin \phi \\
0 & 0 &  \cos \phi \\
\sin \phi & -\cos \phi & 0
\end{bmatrix} .
\end{equation}
For any fixed $\lambda \in \R^+$, $F^\lambda$ is an orthonormal frame for a ps-surface; these ps-surfaces make up an {\em associated family} of ps-surfaces, which includes the original ps-surface when $\lambda=1$.

\subsection{Lifting the Extended Frame}\label{liftextended}
It will also be convenient to work with matrices in the Lie group $SU(2)$ instead of $SO(3)$.
Recall that we can identify $\R^3$ with the Lie algebra $\su(2)$ 
in a way that the adjoint action of $SU(2)$
corresponds to rotations in $\R^3$, with every rotation in $SO(3)$ being realized by $\Ad(g)$
for two possible elements $g \in SU(2)$, differing by a minus sign.  This gives a double cover
$\delta: SU(2) \to SO(3)$.  Provided that $D$ is simply-connected, we can choose a well-defined
lift of $F^\lambda$ into $SU(2)$, and we let $U:D \to SU(2)$ denote this mapping, which we will also refer to as the extended frame of the surface.

To specify the lifting, we fix an identification of the standard basis vectors $\ihat,\jhat,\khat$ for $\R^3$ with matrices in $\su(2)$ given by
$$\ihat \leftrightarrow \frac12 \begin{bmatrix} 0 & \ri \\ \ri & 0\end{bmatrix}, \quad
\jhat \leftrightarrow \frac12 \begin{bmatrix} 0 & -1 \\1 & 0 \end{bmatrix}, \quad
\khat \leftrightarrow \frac12\begin{bmatrix} \ri  & 0 \\ 0 & -\ri\end{bmatrix}.
$$
(This identification has the virtue that the cross-product in $\R^3$ corresponds exactly to the Lie bracket in $\su(2)$.)  Let $e_1,e_2,e_3$ denote the columns of $F^\lambda = \delta(U)$. Then
\begin{equation}\label{econj}
e_1 = U \ihat U^{-1}, \quad e_2 = U \jhat U^{-1}, \quad e_3 = U \khat U^{-1},
\end{equation}
where we are now tacitly identifying the vectors $e_i$ and $\ihat,\jhat,\khat$ with their matrix counterparts.  We can use the differential equations satisfied by $F^\lambda$ to deduce the
components of
$$\omega = U^{-1} dU.$$
The system \eqref{ftlsys} implies that $\di e_1/\di x = -\phi_x e_2$ and
$\di e_3/\di x = -\lambda e_2$.
Differentiating \eqref{econj} shows that $de_1 = U [ \omega ,\ihat] U^{-1}$
and $de_3 = U[\omega,\khat]U^{-1}$.
Thus, the $dx$ coefficient in $\omega$ must
be $-\phi_x \khat + \lambda \ihat$.  Similarly, \eqref{ftlsys}
implies that $\di e_1/\di y = \lambda^{-1}\sin \phi\, e_3$ and
$\di e_2/\di y = -\lambda^{-1}\cos\phi\, e_3$, so the $dy$
coefficient in $\omega$ must be
$$\lambda^{-1}(-\sin\phi\, \jhat - \cos\phi\, \ihat)
=-\dfrac{1}{2\lambda} \begin{bmatrix}
0 & \ri \cos\phi - \sin\phi \\
\ri\cos \phi +\sin \phi & 0 \end{bmatrix}
= -\dfrac{\ri}{2\lambda}\begin{bmatrix}
0 & e^{\ri\phi} \\  e^{-\ri \phi} & 0 \end{bmatrix}.
$$
Thus, $U$ satisfies
\begin{equation}\label{su2sys}
\dfrac{\di U}{\di x} = \frac{\ri}2 U
\begin{bmatrix} -\phi_x & \lambda \\ \lambda & \phi_x \end{bmatrix} , \qquad
\dfrac{\di U}{\di y} = -\frac{\ri}{2\lambda} U
\begin{bmatrix}
0 & e^{\ri\phi} \\  e^{-\ri \phi} & 0 \end{bmatrix}.
\end{equation}

The compatibility condition
for this system is equivalent to requiring that $\omega$, as an $\su(2)$-valued 1-form on $D$,
satisfy the Maurer-Cartan equation
$$d\omega = -\omega \wedge \omega$$
for each $\lambda$.
Notice that the $\lambda$-dependent parts of $\omega$ are on the off-diagonal only.  This means
that $\omega$ is a 1-form on $D$ taking values in the loop algebra
\begin{equation}\label{algloopdef}
\Lambda\su(2) = \{ X: \R^* \to \su(2) |\, X(-\lambda) = \Ad(\sigma_3)\cdot X(\lambda) \},
\quad\text{where } \sigma_3 = \left( \begin{smallmatrix} 1 & 0 \\ 0 & -1\end{smallmatrix}\right).
\end{equation}
Likewise, $U$ is a map from $D$ to the loop group
$$\Lambda SU(2) = \{ g: \R^* \to SU(2) | \,g(-\lambda) = \Ad(\sigma_3)\cdot g(\lambda) \}.$$
(loops satisfying the $\Ad(\sigma_3)$ condition are sometimes referred to as {\em twisted}.)
We will be specifically interested in those subgroups, denoted by $\Lt\su(2)$ and $\Lt SU(2)$
respectively, consisting of loops which extend to $\C^*$ as  analytic functions of $\lambda$.
(Note, however, that such extensions will take values in $\mathfrak{sl}(2,\C)$ and $SL(2,\C)$ respectively.)
In fact, the goal of the method is to recover such loops from analytic data specified along
a pair of characteristic curves in $D$.

Within the group of loops that extend analytically to $\C^*$, we define subgroups of loops which extend
to $\lambda=0$ or $\lambda=\infty$:
\begin{align*}
\Lambda^+ SU(2) &= \{ g \in \Lt SU(2) | g = g_0 + \lambda g_1 + \lambda^2 g_2 + \ldots \} \\
\Lambda^-SU(2) &=\{ g \in \Lt SU(2) | g = g_0 + \lambda^{-1} g_1 + \lambda^{-2} g_2 + \ldots \}
\end{align*}
Within these, we let $\Lambda^+_* SU(2)$ and $\Lambda^-_* SU(2)$
be the subgroups of loops where $g_0$ is the identity matrix.

\subsection{Extended Frame to Normalized Potential Pair}
\label{extended2normalized}
A key tool we will use is
\begin{thm}[Birkhoff Decomposition \cite{G}, \cite{T}]
The multiplication maps
$$\Lambda^+_* SU(2) \times \Lambda^- SU(2) \to \Lt SU(2), \qquad
\Lambda^-_* SU(2) \times \Lambda^+ SU(2) \to \Lt SU(2)$$
are diffeomorphisms.
\end{thm}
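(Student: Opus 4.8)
The plan is to show that the first multiplication map is a bijection with smooth inverse; the second map then follows by the symmetry $\lambda\mapsto\lambda^{-1}$, which interchanges the roles of $0$ and $\infty$, and hence of $\Lambda^+$ and $\Lambda^-$. I would first dispose of injectivity, which is purely algebraic. Suppose $g_+^{(1)}g_-^{(1)}=g_+^{(2)}g_-^{(2)}$ with the factors in the indicated subgroups, and set
$$h:=\bigl(g_+^{(2)}\bigr)^{-1}g_+^{(1)}=g_-^{(2)}\bigl(g_-^{(1)}\bigr)^{-1}.$$
The left-hand side lies in $\Lambda^+_* SU(2)$ (a product of loops holomorphic at $\lambda=0$ with value $I$ there), while the right-hand side lies in $\Lambda^- SU(2)$. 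Thus $h$ extends to a bounded holomorphic $SL(2,\C)$-valued function on all of $\mathbb{P}^1$, so each entry is constant by Liouville's theorem; since $h(0)=I$, we get $h=I$ and the factorization is unique. Equivalently, $\Lambda^+_* SU(2)\cap\Lambda^- SU(2)=\{I\}$.

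Next I would establish that the map is a local diffeomorphism. Differentiating multiplication at the identity gives the linear map $(\xi_+,\xi_-)\mapsto\xi_++\xi_-$ from $\Lambda^+_*\su(2)\oplus\Lambda^-\su(2)$ to $\Lt\su(2)$. Because the $SU(2)$ reality condition forces each Laurent coefficient of a loop in $\Lt\su(2)$ to lie in $\su(2)$, this is precisely the splitting of a Laurent series into its strictly-positive-frequency part (landing in the first summand) and its non-positive part (landing in the second), hence a linear isomorphism. Working in the Banach--Lie group completion of the analytic loops (or in the Fr\'echet category), the inverse function theorem then shows the multiplication map is a local diffeomorphism near the identity; the homogeneity relation $m(ax,yb)=a\,m(x,y)\,b$, together with the fact that left and right translation are diffeomorphisms, propagates this to every point of the image.

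The substantive step is surjectivity: every $g\in\Lt SU(2)$ admits a factorization $g=g_+g_-$. I would phrase this as a Riemann--Hilbert problem and appeal to the Birkhoff--Grothendieck theorem: $g$ serves as a clutching function for a holomorphic rank-two bundle $E_g$ over $\mathbb{P}^1$, obtained by gluing trivial bundles over neighborhoods of $0$ and $\infty$, and the desired factorization exists precisely when $E_g$ is holomorphically trivial, i.e. when its splitting type $\mathcal{O}(k)\oplus\mathcal{O}(-k)$ has $k=0$. Since $\det g\equiv 1$, the bundle has degree zero, and the main obstacle is to rule out $k>0$. This is exactly where the $SU(2)$ reality condition is essential: it endows $E_g$ with a compatible positive-definite Hermitian structure, so that a positivity argument in the spirit of Gohberg--Krein forces the partial indices of the associated Wiener--Hopf/Toeplitz operator to vanish, whence $k=0$. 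Once surjectivity is known, the map is a smooth bijection that is a local diffeomorphism, hence a global diffeomorphism, and the projections $g\mapsto g_\pm$ are smooth because they are given by the bounded Laurent-series projections composed with the (now invertible) solution operator of the Riemann--Hilbert problem. I expect the vanishing of the partial indices --- equivalently, the triviality of $E_g$, which is what distinguishes this unitary factorization from the merely dense big cell of the complexified group --- to be the crux of the argument.
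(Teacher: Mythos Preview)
The paper does not prove this theorem; it records it as a known result, citing Gohberg and Toda for the general Birkhoff decomposition (which gives a diffeomorphism onto an open dense ``big cell''), and then in the remark immediately following invokes Brander's theorem that for compact semisimple groups such as $SU(2)$ the big cell is everything. So there is nothing in the paper against which to check your argument line by line.

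Your sketch is nonetheless a reasonable and essentially correct outline of how one would prove the statement directly. The injectivity argument via Liouville is standard and fine. The local-diffeomorphism step is also sound; your claim that each Laurent coefficient of a loop in $\Lt\su(2)$ lies in $\su(2)$ is correct here precisely because the loops are required to be $\su(2)$-valued on the \emph{real} axis (so conjugation of $\lambda$ does not shift the index), which is a feature of this particular setup worth flagging. Your identification of the crux --- that the $SU(2)$ reality condition forces the partial indices of the associated Riemann--Hilbert problem to vanish, equivalently that the clutched bundle is trivial --- is exactly the content the paper outsources to Brander (and, classically, to Gohberg--Krein). You stop short of actually carrying out that positivity argument, so as written the proposal is a plan rather than a proof; but the plan is accurate, and it makes explicit what the paper is content to cite. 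One small omission: the loop groups here carry the twisting condition $g(-\lambda)=\Ad(\sigma_3)g(\lambda)$, which your factorization should respect --- it does automatically, since conjugation by $\sigma_3$ preserves each of $\Lambda^\pm$, but a line acknowledging this would close the gap.
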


\begin{rem} In general, the Birkhoff decomposition theorem asserts that
the multiplication maps are diffeomorphisms onto an open dense subset, known as the {\bf big cell}.  However, it follows from the recent result of Brander \cite{B} that in the case of compact semisimple Lie groups like $SU(2)$, the big cell is everything.
\end{rem}

We apply both Birkhoff decompositions to $U$, giving
\begin{equation}\label{usplit}
U = U^X_+ V_- = U^Y_- V_+,
\end{equation}
where $U^X_+,U^Y_- \in \Lambda_*^\pm SU(2)$, $V_-(x,y) \in \Lambda^- SU(2)$, and $V_+(x,y) \in \Lambda^+ SU(2)$.
The superscripts in $U^X_+$ and $V^Y_-$ are justified by the following important insight:
\begin{prop}\label{onlyXonlyY} $U^X_+$ does not depend on $y$, and $U^Y_-$ does not depend on  $x$.
\end{prop}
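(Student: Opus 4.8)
The plan is to differentiate the Birkhoff factorization \eqref{usplit} and exploit the very specific $\lambda$-dependence of $\w=U^{-1}dU$ recorded in \eqref{su2sys}: the $dx$-part of $\w$ is a \emph{polynomial} in $\lambda$ (only $\lambda^0$ and $\lambda^1$ occur), whereas its $dy$-part is \emph{homogeneous} of order $\lambda^{-1}$. First, though, I would settle the differentiability the argument needs: since the Birkhoff decomposition theorem asserts that the relevant multiplication map is a diffeomorphism and $U(x,y)$ depends smoothly on $(x,y)$, the factors $U^X_+(x,y)\in\Lambda^+_*SU(2)$ and $V_-(x,y)\in\Lambda^-SU(2)$ in $U=U^X_+V_-$ inherit smooth dependence on $(x,y)$, so I may differentiate them. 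Writing $\alpha:=(U^X_+)^{-1}\di_y U^X_+$ and extracting the $dy$-component $\w_y$ from $\w=V_-^{-1}\big((U^X_+)^{-1}dU^X_+\big)V_-+V_-^{-1}dV_-$ gives
\[
\w_y=V_-^{-1}\alpha\,V_-+V_-^{-1}\di_y V_-,\qquad\text{i.e.}\qquad
\alpha=V_-\,\w_y\,V_-^{-1}-(\di_y V_-)\,V_-^{-1}.
\]

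The heart of the matter is then an order count in $\lambda$. Because $U^X_+=I+\lambda g_1+\lambda^2 g_2+\cdots$ has constant leading coefficient $I$, its $y$-derivative begins at order $\lambda^1$, so $\alpha$ contains only strictly positive powers of $\lambda$. On the other hand $V_-$ and $V_-^{-1}$ contain only powers $\lambda^{\le 0}$, while $\w_y$ is homogeneous of order $\lambda^{-1}$; hence $V_-\w_y V_-^{-1}$ contains only powers $\lambda^{\le -1}$ and $(\di_y V_-)V_-^{-1}$ only powers $\lambda^{\le 0}$. Thus the right-hand side of the second identity above involves only powers $\lambda^{\le 0}$, whereas the left-hand side involves only powers $\lambda^{\ge 1}$; a loop enjoying both properties vanishes identically. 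Therefore $\alpha=0$, that is, $\di_y U^X_+=0$.

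The claim for $U^Y_-$ is obtained symmetrically, interchanging the roles of $x$ and $y$ and of $+$ and $-$: from $U=U^Y_-V_+$ I would extract the $dx$-component instead. Now $(U^Y_-)^{-1}\di_x U^Y_-$ carries only powers $\lambda^{\le -1}$ (since $U^Y_-=I+\lambda^{-1}(\cdots)$), whereas $V_+,V_+^{-1}$ carry only powers $\lambda^{\ge 0}$ and $\w_x$ is a polynomial in $\lambda$ with powers $0$ and $1$; the identical count forces $(U^Y_-)^{-1}\di_x U^Y_-=0$, so $U^Y_-$ is independent of $x$. The one genuinely delicate point here is the smooth dependence of the Birkhoff factors on $(x,y)$, which is what converts the purely algebraic splitting of $\w$ into a differential statement about $U^X_+$ and $U^Y_-$; I expect this to be the main obstacle, and it is presumably what the promised discussion of uniqueness and differentiability is meant to provide. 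Everything after that is bookkeeping of powers of $\lambda$.
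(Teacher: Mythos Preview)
Your argument is correct and is essentially the paper's own proof: both differentiate the factorization $U=U^X_+V_-$, obtain $(U^X_+)^{-1}dU^X_+ = V_-\,\omega\,V_-^{-1} - dV_-\,V_-^{-1}$, and then observe that the left-hand side carries only strictly positive powers of $\lambda$ while the $dy$-piece of the right-hand side carries only non-positive powers, forcing $\partial_y U^X_+=0$. Your additional remarks on smooth dependence of the Birkhoff factors and on why the $\lambda^0$ term on the left vanishes (because $U^X_+\in\Lambda^+_*SU(2)$ starts with $I$) make explicit what the paper leaves implicit, but the strategy is identical.
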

\begin{proof}  From \eqref{su2sys}, it follows that $\omega = U^{-1} dU$ has
the form
$$\omega = A\, dx + B\, dy, \qquad A = A_0 + \lambda A_1, \quad B = B_1 \lambda^{-1},$$
where $A_0,A_1, B_1$ depend only on $x$ and $y$.
Differentiating $U^X_+ = U (V_-)^{-1}$ gives
\begin{equation}\label{duplus}
(U^X_+)^{-1} dU^X_+ = V_-(A \,dx + B\, dy)(V_-)^{-1}-dV_- (V_-)^{-1}.
\end{equation}
The left-hand side contains only positive powers of $\lambda$, while only the $dx$ term on
the right contains such powers, so it follows that $U^X_+$ depends on $x$ and $\lambda$ only.  A similar argument shows that $U^Y_-$ depends on $y$ and $\lambda$ only.
\end{proof}

For the rest of this section we will assume that the domain $D$ on which
$U(x,y)$ is defined contains the origin, and that $U(x,y)$ satisfies the following initial condition.  
\begin{equation}
U(0,0;\lambda)=I \;\;\forall{\lambda}. \label{initialized}
\end{equation}
Note that, by the uniqueness of the Birkhoff splitting, this implies that \\ $U^Y_-, U^X_+, V_-,V_+$ are also equal to the identity matrix when $x=y=0$.

\bigskip
With these conditions in hand, we can determine the differential equations for $U^X_+$ and $U^Y_-$.  First, the coefficients of non-positive powers of $\lambda$ on the right-hand side of \eqref{duplus}
must vanish, and hence
$$\dfrac{\di V_-}{\di y}=V_- B.$$
Letting $V_- = V_{-0}(x,y) + O(\lambda^{-1})$, we find that $\di V_{-0}/\di y =0$.  Taking coefficients of positive powers of $\lambda$ in \eqref{duplus}  now gives
\begin{equation}\label{duplusvzero}
(U^X_+)^{-1} \dfrac{\partial U^X_+}{\di x}  = \lambda V_{-0} A_1 (V_{-0})^{-1}.
\end{equation}
Next, we must determine $V_{-0}(x)$.  Restricting \eqref{duplus} to the line $y=0$ gives
\begin{equation}\label{duplusaxis}
(U^X_+)^{-1} \dfrac{\partial U^X_+}{\di x} = V_- A(V_-)^{-1}
-\left.\dfrac{\di V_-}{\di x} (V_-)^{-1}\right|_{y=0}.
\end{equation}
Again, the left-hand side contains only
positive powers of $\lambda$.
Taking the $\lambda^0$ coefficient in \eqref{duplusaxis} gives an expression for $\di V_{-0}/\di x$
which {\it a priori} involves products of $A_1$ with the coefficient of $\lambda^{-1}$ in $V_-$.
To eliminate such terms, we apply the following
\begin{lem}\label{sepps} Suppose $P(t) \in \Lambda^\pm SU(2)$, $Q(t)\in\Lambda^\mp SU(2)$ and
$R(t) \in \Lambda^\pm \su(2)$ satisfy
$$P^{-1}\dfrac{\di P}{\di t} = QRQ^{-1} - \dfrac{\di Q}{\di t} Q^{-1}$$
on some $t$-interval containing $t_0$, and $P(t_0) = Q(t_0) = I$.
Then $Q(t)$ has no $\lambda$-dependence.
\end{lem}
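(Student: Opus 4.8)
The plan is to eliminate $P$ from the hypothesis, convert it into a linear ordinary differential equation for $Q$, and then exploit the fact that a twisted loop cannot carry both strictly positive and strictly negative powers of $\lambda$ unless it is $\lambda$-independent. I will write out the case $P\in\Lambda^+SU(2)$, $Q\in\Lambda^-SU(2)$, $R\in\Lambda^+\su(2)$; the other case is identical with the roles of $\Lambda^+$ and $\Lambda^-$ interchanged.

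First I would set $\alpha:=P^{-1}\,\di P/\di t$. Since $P$ takes values in $\Lambda^+SU(2)$, both $P$ and $P^{-1}$ are analytic at $\lambda=0$, so $\alpha$ lies in $\Lambda^+\su(2)$, carrying only non-negative powers of $\lambda$. Multiplying the hypothesis on the right by $Q$ and rearranging gives the linear system
\begin{equation*}
\di Q/\di t = QR - \alpha Q, \qquad Q(t_0)=I,
\end{equation*}
in which $R,\alpha\in\Lambda^+\su(2)$ carry only non-negative powers of $\lambda$ while $Q$, by hypothesis, carries only non-positive powers; note that $P$ has disappeared.

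The core of the argument is an induction on the Taylor order in $t$ about $t_0$. Writing $s=t-t_0$ and expanding $Q=\sum_{a\ge0}Q^{[a]}s^a$, $R=\sum_b R^{[b]}s^b$, $\alpha=\sum_b\alpha^{[b]}s^b$, the equation yields the recursion
\begin{equation*}
(j+1)\,Q^{[j+1]} = \sum_{a+b=j}\bigl(Q^{[a]}R^{[b]} - \alpha^{[b]}Q^{[a]}\bigr), \qquad Q^{[0]}=I.
\end{equation*}
Each $Q^{[a]}$, being a $t$-derivative of $Q$, still carries only non-positive powers of $\lambda$. I claim every $Q^{[a]}$ is $\lambda$-independent. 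This is clear for $a=0$. Assuming it up to index $j$, the right-hand side above is a sum of products of a $\lambda$-independent factor $Q^{[a]}$ with a factor $R^{[b]}$ or $\alpha^{[b]}$ carrying only non-negative powers, so $Q^{[j+1]}$ carries only non-negative powers; but it also carries only non-positive powers, and the only loops in $\Lambda^+\su(2)\cap\Lambda^-\su(2)$ are the $\lambda$-independent ones. Hence $Q^{[j+1]}$ is $\lambda$-independent, closing the induction. This intersection step is the real mechanism of the lemma.

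Consequently every $t$-derivative of $Q$ at $t_0$ is $\lambda$-independent. I expect the main obstacle to be promoting this jet statement to a genuine statement on the interval: if the loops are taken real-analytic in the surface parameters (as they are in this theory) the conclusion is immediate, but in the smooth category the recursion only controls the jet at $t_0$. A clean way around this is to note that, since $\di Q/\di t\in\Lambda^-$, the identity $\di Q/\di t = QR-\alpha Q$ forces $\di Q/\di t=\pi_{\le0}(QR-\alpha Q)$, where $\pi_{\le0}$ projects onto non-positive powers of $\lambda$. This modified equation genuinely preserves $\Lambda^-$ and admits the $\lambda$-independent solution of $\di Q_0/\di t=Q_0R_0-\alpha_0 Q_0$ with $Q_0(t_0)=I$; uniqueness for this linear equation then gives $Q=Q_0$. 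The technical point remaining is to justify that uniqueness, for which one fixes a suitable Banach completion of $\Lambda^-SU(2)$ and applies the standard Gronwall estimate.
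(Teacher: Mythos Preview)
Your argument is correct, but it takes a different route from the paper's proof. The paper does not eliminate $P$; instead it introduces an auxiliary loop $S(t)\in\Lambda^{+}SU(2)$ solving $\di S/\di t = SR$ with $S(t_0)=I$, computes that $SQ^{-1}$ satisfies the \emph{same} equation as $P$ (namely $P^{-1}\di P/\di t = QRQ^{-1}-(\di Q/\di t)Q^{-1}$) with the same initial value, and concludes $P=SQ^{-1}$ by ODE uniqueness in the group. Hence $Q=P^{-1}S\in\Lambda^{+}SU(2)$, and since also $Q\in\Lambda^{-}SU(2)$ by hypothesis, $Q$ is $\lambda$-independent. This one-step trick avoids your Taylor recursion, the analytic/smooth distinction, the projection $\pi_{\le 0}$, and any Banach-space uniqueness argument: the only uniqueness invoked is for an ordinary matrix ODE, applied loopwise. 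Your approach, by contrast, rewrites the hypothesis as a linear ODE for $Q$ alone and then has to wrestle with showing that the solution stays $\lambda$-free --- which you do correctly via the projected equation, but at the cost of extra machinery. Both proofs ultimately rest on the same key fact, $\Lambda^{+}\cap\Lambda^{-}=\{\text{constant loops}\}$; the paper's choice of auxiliary $S$ simply lands you there in one line.
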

\begin{proof} For simplicity, take the upper sign in the hypotheses, the proof for the
lower sign being identical in form.  Let $S(t) \in \Lambda^+SU(2)$ satisfy
$$\dfrac{\di S}{\di t} = SR, \qquad S(t_0) = I.$$
Then
$$(SQ^{-1})^{-1} \dfrac{\di (SQ^{-1})}{\di t} = QRQ^{-1} - \dfrac{\di Q}{\di t} Q^{-1},$$
so that $SQ^{-1}$ satisfies the same differential equation, as a function of $t$, that $P$ does.
Since $S Q^{-1}$ also coincides with $P$ when $t=t_0$, it follows that $P = S Q^{-1}$ for all $t$-values
in the interval.  Hence $Q= P^{-1} S \in \Lambda^+ SU(2)$, and it follows that $Q$ has no $\lambda$-dependence.
\end{proof}

Using $t=x$, $P=U^X_+$, $Q=V_-\restr_{y=0}$ and $R=A\restr_{y=0}$ in the lemma, we conclude that
the restriction of $V_-$ to the $x$-axis has no $\lambda$-dependence.
Thus, we can replace $V_-$ with $V_{-0}$ in \eqref{duplusaxis}.
Taking the $\lambda^0$ coefficient in that equation and using \ref{su2sys} to get $A_0$ now gives
$$\dfrac{\di V_{-0}}{\di x}= V_{-0} A_0 = \frac{\ri}2 V_{-0}
\begin{bmatrix} -\phi_x(x,0) & 0\\ 0 & \phi_x(x,0) \end{bmatrix}.
$$
Using the initial condition $V_{-0}(0)=I$, we obtain
\begin{equation}\label{vzeroform}
V_{-0}(x) = \begin{bmatrix} e^{-\ri\alpha(x)/2} & 0 \\ 0 & e^{\ri \alpha(x)/2}\end{bmatrix}
\end{equation}
where we take $\alpha(x) =\phi(x,0)-\phi(0,0)$. Substituting in \eqref{duplusvzero} gives
\begin{equation}\label{duplusx}
(U^X_+)^{-1} \dfrac{\partial U^X_+}{\di x}
=\dfrac{\ri}2 \lambda \begin{bmatrix} 0 & e^{-\ri \alpha(x)} \\ e^{\ri \alpha(x)} & 0 \end{bmatrix} =: \eta_+^X.
\end{equation}

Similarly, differentiating $U^Y_- = U (V_+)^{-1}$ gives
$$
(U^Y_-)^{-1} dU^Y_- = V_+(A \,dx + B\, dy)(V_+)^{-1}-d V_+ (V_+)^{-1}.
$$
We restrict this equation to the $y$-axis, giving
\begin{equation}\label{duminusaxis}
(U^Y_-)^{-1}\dfrac{\di U^Y_-}{\di y} = V_+ B (V_+)^{-1}
-\dfrac{\di V_+}{\di y} (V_+)^{-1}.
\end{equation}
Using Lemma \ref{sepps} with $t=y$, $P=U_-$ and $Q=V_+$,
we conclude that the coefficients of all the positive powers of
$\lambda$ in $V_+$ vanish along the $y$-axis.
(However, the coefficient of $\lambda^0$ in $V_+$ will depend on both $x$ and $y$.)
Then, examining the $\lambda^0$ coefficient
in \eqref{duminusaxis} shows that, along the $y$-axis $V_+$ is constant and equal
to the identity matrix.  Then
\begin{equation}\label{duminusy}
(U^Y_-)^{-1}\dfrac{\di U^Y_-}{\di y}
=-\frac{\ri}{2\lambda}
\begin{bmatrix}
0 & e^{\ri\beta(y)} \\  e^{-\ri \beta(y)} & 0 \end{bmatrix}
=: \eta^Y_-
\end{equation} 
for $\beta(y) = \phi(0,y)$.

We will refer to $\eta^X_+(x, \lambda)$ in \eqref{duplusx} and
$\eta^Y_-(y,\lambda)$ in \eqref{duminusy} as a pair of {\em normalized potentials}, by analogy with holomorphic potentials that are determined by the loops associated to constant mean curvature surfaces.  
We remark the formulas expressing the potentials in terms of $\phi$,
which agree with those of Toda (\cite{T2}, equations (23) and (24)), 
are analogues of Wu's formula for CMC surfaces and their associated
harmonic maps \cite{Wu}.

\subsection{Normalized Potentials to Ps-Surface} 
Just as holomorphic potentials can be used to reconstruct constant mean curvature surfaces, we can use normalized potentials to reconstruct ps-surfaces.  To see how this works, suppose that $V_- = V_{-0} T_-$ for $T_-(x,y) \in \Lambda_*^- SU(2)$.  Then \eqref{usplit} implies that
$$(U^Y_-)^{-1} U^X_+ V_{-0} = V_+ (T_-)^{-1}.$$
The left-hand side is determined by the normalized potentials (i.e., by $\alpha(x)$ and $\beta(y)$), while the
right-hand side is a Birkhoff splitting (albeit, with the $\Lambda_*^-$ piece as the second factor).
Thus, given $\alpha(x)$ and $\beta(y)$ and applying the splitting, we determine $T_-(x,y)$, and thus
construct a loop satisfying \eqref{usplit} by setting $U=U_+^X V_{-0} T_-$.

More formally, we have the following

\begin{thm}\label{converse}  Let functions $\alpha(x)$ and $\beta(y)$ be defined on intervals $D_1,D_2 \subset \R$
containing zero, satisfying $\alpha(0)=0$.  Let $V_{-0}(x)$ be given by \eqref{vzeroform}, and
let $U^X_+(x), U^Y_-(y)$ satisfy \eqref{duplusx} and \eqref{duminusy}, respectively, with $U^X_+(0)=I$ and $U^Y_-(0)=I$.  
Let $D=D_1\times D_2$, 
and let
$$(U^Y_-)^{-1} U^X_+ V_{-0} = V_+ (T_-)^{-1}$$
be its Birkhoff decomposition, for $V_+(x,y) \in \Lambda^+ SU(2)$ and $T_-(x,y)\in \Lambda_*^-SU(2)$.
Then
$$U = U^X_+ V_{-0} T_- = U_-^Y V_+$$
satisfies \eqref{initialized} and also satisfies \eqref{su2sys} for a function $\phi(x,y)$ on $D$ such that
$$\phi(x,0) = \alpha(x)+\beta(0), \qquad \phi(0,y) =\beta(y).$$
\end{thm}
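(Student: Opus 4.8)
The plan is to study the Maurer--Cartan form $\omega = U^{-1}dU$ by using each of the two factorizations $U = U^X_+ V_{-0}T_- = U^Y_- V_+$ separately: each one controls the powers of $\lambda$ occurring in $\omega$ from one side. Write $\omega = A\,dx + B\,dy$. First I would dispatch \eqref{initialized}: since $\alpha(0)=0$ gives $V_{-0}(0)=I$ and $U^X_+(0)=U^Y_-(0)=I$, the loop being decomposed collapses to $I$ at the origin, so uniqueness of the Birkhoff splitting forces $V_+ = T_- = I$ there, whence $U(0,0;\lambda)=I$ for all $\lambda$.

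For the $\lambda$-structure of $A$, set $W^X := U^X_+ V_{-0}$, which depends on $x$ and $\lambda$ only. From $U = W^X T_-$ I get $A = T_-^{-1}\bigl((W^X)^{-1}(W^X)_x\bigr)T_- + T_-^{-1}(T_-)_x$; a short computation with \eqref{duplusx} and \eqref{vzeroform} shows $(W^X)^{-1}(W^X)_x = \tfrac{\ri}{2}\mathrm{diag}(-\alpha_x,\alpha_x) + \tfrac{\ri}{2}\lambda\begin{bmatrix}0&1\\1&0\end{bmatrix}$, the factors $e^{\pm\ri\alpha}$ being absorbed by the conjugation by $V_{-0}$. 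Since $T_-\in\Lambda^-_* SU(2)$, every term here carries powers $\le 1$. On the other hand $U = U^Y_- V_+$ with $U^Y_-$ independent of $x$ gives $A = V_+^{-1}(V_+)_x$, which carries only powers $\ge 0$. Hence $A = A_0 + \lambda A_1$; the top coefficient is $A_1 = \tfrac{\ri}{2}\begin{bmatrix}0&1\\1&0\end{bmatrix}$, and the $\lambda^0$ coefficient equals $\tfrac{\ri}{2}\mathrm{diag}(-\alpha_x,\alpha_x)$ plus a commutator of two off-diagonal matrices (off-diagonal by the twisting condition \eqref{algloopdef}), hence is diagonal: $A_0 = \tfrac{\ri}{2}\mathrm{diag}(-\psi,\psi)$ for a real function $\psi$. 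The symmetric argument for $B$ — now $U^Y_- V_+$ bounds powers $\ge -1$ while $W^X T_-$ bounds them $\le -1$ — gives $B = \lambda^{-1}B_1$; since $B_1\in\su(2)$ is off-diagonal with $\det B_1 = \tfrac14$ (determinants are conjugation-invariant), I may write $B_1 = -\tfrac{\ri}{2}\begin{bmatrix}0 & e^{\ri\phi}\\ e^{-\ri\phi}&0\end{bmatrix}$, which \emph{defines} a smooth $\phi$ once I fix $\phi(0,0)=\beta(0)$.

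The main obstacle is that this pins down $\phi$ from the $dy$-equation only, and I must still match the $dx$-equation to \eqref{su2sys} with the \emph{same} $\phi$, i.e.\ prove $\psi = \phi_x$. For this I would invoke that $\omega$ satisfies the Maurer--Cartan equation automatically, being the logarithmic derivative of an honest map $U$ (the Birkhoff splitting is a diffeomorphism, so $U$ is smooth). Expanding $A_y - B_x = [A,B]$ in powers of $\lambda$, the $\lambda^{-1}$ component collapses, after the commutator computations with $A_0, A_1, B_1$, precisely to $\psi = \phi_x$, so that the $dx$-part of \eqref{su2sys} holds; the $\lambda^0$ component yields $\psi_y = \sin\phi$, recovering the sine-Gordon equation as a bonus.

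Finally, for the boundary data I restrict to the axes, where one factor trivializes. Along $y=0$ the loop being split is $U^X_+ V_{-0}\in\Lambda^+ SU(2)$, so uniqueness forces $T_- = I$, whence $A_0 = \tfrac{\ri}{2}\mathrm{diag}(-\alpha_x,\alpha_x)$ there; thus $\phi_x(x,0)=\alpha'(x)$, and integrating with $\alpha(0)=0$ and $\phi(0,0)=\beta(0)$ gives $\phi(x,0)=\alpha(x)+\beta(0)$. Along $x=0$ the loop reduces to $(U^Y_-)^{-1}\in\Lambda^-_* SU(2)$, so uniqueness forces $V_+ = I$, hence its $\lambda^0$ part is $I$ and $B_1 = -\tfrac{\ri}{2}\begin{bmatrix}0 & e^{\ri\beta}\\ e^{-\ri\beta}&0\end{bmatrix}$, giving $\phi(0,y)=\beta(y)$. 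These two determinations agree at the origin, completing the proof.
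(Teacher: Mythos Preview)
Your proof is correct and follows the same overall architecture as the paper's: bound the $\lambda$-powers in $U^{-1}U_x$ and $U^{-1}U_y$ from both factorizations, read off the explicit top/bottom coefficients, and use the Maurer--Cartan identity to force the diagonal part of $A_0$ to be $\phi_x$. The one genuine difference is in how you handle the restrictions to the axes. The paper rearranges the $x$- and $y$-derivative identities into the form of Lemma~\ref{sepps} and invokes that lemma to conclude that $V_-\restr_{y=0}$ and $V_+\restr_{x=0}$ are $\lambda$-independent, and only then reads off $\kappa(x,0)=\alpha'(x)$ and $V_{+0}(0,y)=I$. You instead observe directly that at $y=0$ the loop being split, namely $U^X_+ V_{-0}$, already lies in $\Lambda^+ SU(2)$, and at $x=0$ it is $(U^Y_-)^{-1}\in\Lambda^-_* SU(2)$; uniqueness of the Birkhoff factorization then immediately gives $T_-(x,0)=I$ and $V_+(0,y)=I$.

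Your route is cleaner here and bypasses Lemma~\ref{sepps} entirely for this theorem; the paper's lemma-based argument has the mild advantage of being the same device already used in the forward construction (\S\ref{extended2normalized}), so it emphasizes the symmetry between the two directions. Either way the content is the same: the splitting trivializes along each axis, which pins down the boundary data for $\phi$.
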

Of course, it follows that $\phi(x,y)$ satisfies the sine-Gordon equation \eqref{SGE}.
Moreover, the {\em Sym formula}:
\begin{equation}\label{sym}
f(x,y;\lambda) = \left.\dfrac{\di U}{\di \log \lambda} U^{-1} \right.
\end{equation}
gives a family of pseudospherical surfaces which, for each value of $\lambda$, have $U(x,y;\lambda)$ as extended frame.
(Note, however, that $f$ may fail to be an immersion at some points.)
\begin{proof}[Proof of Theorem \ref{converse}]
Let $V_- = V_{-0} T_-$.  Differentiating $U^Y_- V_+ = U^X_+ V_-$ with respect to $x$ gives
\begin{equation}\label{dwplusx}
(V_+)^{-1} \dfrac{\di V_+}{\di x}
= (V_-)^{-1} \dfrac{\di V_-}{\di x}
+ (V_-)^{-1}\left( (U^X_+)^{-1} \dfrac{\di U^X_+}{\di x}\right) V_-.
\end{equation}
The highest power of $\lambda$ on the right-hand side is $\lambda^1$
in the factor $(U^X_+)^{-1} \dfrac{\di U^X_+}{\di x}$, so
$U^{-1} \dfrac{\di U}{\di x}=(V_+)^{-1} \dfrac{\di V_+}{\di x}$ contains only $\lambda^0$ and $\lambda^1$.  Furthermore,
because of the twisting condition, the $\lambda^0$ term is diagonal and the $\lambda^1$ term
is off-diagonal.  Substituting \eqref{duplusx} into \eqref{dwplusx} gives
\begin{eqnarray}\label{duxderived}
U^{-1} \dfrac{\di U}{\di x}  & = &
\dfrac{\ri}2 \begin{bmatrix}-\kappa(x,y) & 0 \\ 0 & \kappa(x,y)\end{bmatrix}
+ (V_{-0})^{-1} \dfrac{\ri}2 \lambda \begin{bmatrix} 0 & e^{-\ri \alpha(x)} \\ e^{\ri \alpha(x)} & 0 \end{bmatrix} V_{-0} \nonumber \\
& = & \dfrac{\ri}2 \begin{bmatrix}-\kappa(x,y) & \lambda \\ \lambda & \kappa(x,y)\end{bmatrix}
\end{eqnarray}
for some function $\kappa(x,y)$.  Equation \eqref{dwplusx} implies
$$(U^X_+)^{-1} \dfrac{\di U^X_+}{\di x} = V_- \left((V_+)^{-1} \dfrac{\di V_+}{\di x}\right) (V_-)^{-1}
-\dfrac{\di V_-}{\di x} (V_-)^{-1}.$$
As with \eqref{duplusaxis}, applying Lemma \ref{sepps} to the last equation lets us
conclude that the restriction of $V_-$ to the $x$-axis has no negative
powers of $\lambda$.  Taking the $\lambda^0$ coefficient of the
restriction of \eqref{dwplusx} to the $x$-axis now gives
$$
\left.U^{-1} \dfrac{\di U}{\di x}\right|_{y=0} = (V_{-0})^{-1} \dfrac{\di V_{-0}}{\di x} +O(\lambda)
=\dfrac{\ri}2\begin{bmatrix} -\alpha'(x) & 0 \\ 0 & \alpha'(x)\end{bmatrix} + O(\lambda),
$$
which implies that $\kappa(x,0) = \alpha'(x)$.

Differentiating $U^X_+ V_- = U^Y_- V_+$ with respect to $y$ gives
\begin{equation}\label{vminusy}
(V_-)^{-1} \dfrac{\di V_-}{\di y} 
= (V_+)^{-1} \dfrac{\di V_+}{\di y}
+ (V_+)^{-1} \left( (U^Y_-)^{-1} \dfrac{\di U^Y_-}{\di y} \right) V_+.
\end{equation}

Because $\di V_{-0}/\di y=0$, only negative powers of $\lambda$ are present in
$U^{-1} \dfrac{\di U}{\di y} = (V_-)^{-1} \dfrac{\di V_-}{\di y}$.
Because the only power of $\lambda$ in $(U^Y_-)^{-1} \dfrac{\di U^Y_-}{\di y}$ is $\lambda^{-1}$,
then this is also the only power in $U^{-1} \dfrac{\di U}{\di y}$.
If we let $V_+ = V_{+0} T_+$ for $T_+ \in \Lambda^+_* SU(2)$, then $V_{+0}$ is diagonal because of
the twisting, and we can let
$$V_{+0}(x,y) = \begin{bmatrix} e^{\ri \psi(x,y)} & 0 \\ 0 & e^{-\ri\psi(x,y)} \end{bmatrix}$$
for some function $\psi$.
Then
\begin{eqnarray}\label{duyderived}
U^{-1} \dfrac{\di U}{\di y} &=& (V_{+0})^{-1} \left((U^Y_-)^{-1} \dfrac{\di U^Y_-}{\di y} \right) V_{+0} \nonumber \\ 
& = & 
-\dfrac{\ri}{2\lambda} \begin{bmatrix} 0 & e^{\ri (\beta(y)-2\psi(x,y)} \\ e^{-\ri(\beta(y)-2\psi(x,y))}&0\end{bmatrix}.
\end{eqnarray}
In order to be consistent with the $y$-derivative in \eqref{su2sys}, we let
$$\phi(x,y) = \beta(y)-2\psi(x,y).$$
Then the compatibility condition between \eqref{duxderived} and \eqref{duyderived}
implies that $\kappa=\phi_x$ and $\phi$ satisfies the sine-Gordon equation.

Lastly, we want to show that $\phi(0,y)=\beta(y)$, 
which is equivalent to $V_{+0}(0,y)=I$.
Letting $B$ stand for the right-hand side in \eqref{duyderived}, then
equation \eqref{vminusy} can be rearranged to give
$$(U^Y_-)^{-1} \dfrac{\di U^Y_-}{\di y} = V_+ B (V_+)^{-1} - \dfrac{\di V_+}{\di y}(V_+)^{-1}.$$
As with \eqref{duminusaxis},
we restrict this equation to the $y$-axis, and apply Lemma \ref{sepps} to conclude that
$V_+(0,y)$ has no positive powers of $\lambda$.  We can replace $V_+$ with $V_{+0}$ in
the restriction of \eqref{duyderived} to the $y$-axis, and taking the $\lambda^0$ coefficient
in that equation now shows that $V_{+0}(0,y)=I$.
\end{proof}

\subsection{Uniqueness and Differentiability} \label{one2one} 
\def\f{\mathsf f}
Let $\eta_f$ denote the unique normalized potential associated to an oriented  \linebreak ps-immersion $f$ by the algorithm of 
\S\S\ref{ps2Darboux}--\ref{extended2normalized}.
(We use $\eta$ as an abbreviation for
a pair $(\eta^Y_-, \eta^X_+)$ of normalized potentials.)
Then if $f$ and $\hat{f}$ are two ps-immersions that differ by a rigid motion,
$\eta_f = \eta_{\hat{f}}$.

Let $\f[\eta]$ denote the ps-immersion associated to a
potential pair $\eta$ by the construction of Theorem \ref{converse}
and the Sym formula \eqref{sym} with $\lambda=1$.
Suppose $f(x,y)$ is a ps-immersion with angle function $\phi(x,y)$.
Then by Theorem \ref{converse} the angle function $\widetilde\phi$ for
$\f[\eta_{f}]$ agrees with $\phi$ along the $x$- and $y$-axes.
It follows by a theorem of Bianchi that $\widetilde\phi = \phi$,
and by a theorem of Enneper (cf. \cite{Tthesis} Thm. 2.2.1 or \cite{T2} Cor. 2) that $f$ and $\f[\eta_{f}]$ differ by a rigid motion.
Thus, there is a 1:1 relation, up to rigid motion, between (local) ps-immersions and normalized potentials.

So far, we have assumed that all objects under consideration are smooth, but
we can be more precise about how degrees of differentiability behave
under these correspondences.
Let $f$ be a ps-immersion which is $C^n$ for $n \geq 2$.
Then $\phi$ is $C^{n-1}$, and therefore the potential pair 
given by \eqref{duplusx} and \eqref{duminusy} is $C^{n-1}$ in $x,y$ and analytic in $\lambda\in \C^*$.

Conversely, consider some potential pair $\eta$ which is $C^{n-1}$ in $x,y$ for $n \geq 3$ and analytic in $\lambda\in \C^*$.  
Then by solving \eqref{duplusx} and \eqref{duminusy}
we obtain $U_+^X, U_-^Y$ which are $C^{n}$ in $x,y$ and analytic in $\lambda$.  The next step in our construction is the Birkhoff splitting of 
$(U_-^Y)^{-1} U_+^X$ as $V_+ T_-^{-1}$.  
Since the splitting is analytic \cite{G},
$V_+, T_-$ are $C^{n}$ in $x,y$ and analytic in $\lambda$.  
Therefore $U(x,y,\lambda)$ is $C^{n}$ in $x,y$ and analytic in $\lambda$.  
Now we use the Sym formula \eqref{sym}, which differentiates with respect to $\log\lambda$, to obtain a mapping $f$ which is $C^{n}$ in $x,y$ 
and analytic in $\lambda$.

For each fixed value of $\lambda$, $f$ will be an immersion except
at points where $\sin\phi(x,y)=0$.  (At such points,
the surface could have singularities, either weakly regular
points \cite{MS} or cone points, as we will see in Part II.)
To see why these are the only points where $f$ could fail to be an immersion,
use the Sym formula and \eqref{su2sys} to compute
$f_x = U V' U^{-1}$  and $f_y = U W' U^{-1}$, where 
$$V = \frac{\ri}2 
\begin{bmatrix} -\phi_x & \lambda \\ \lambda & \phi_x \end{bmatrix}, \qquad W=
 -\frac{\ri}{2\lambda} 
\begin{bmatrix}
0 & e^{\ri\phi} \\  e^{-\ri \phi} & 0 \end{bmatrix}
$$
and prime denotes differentiation with respect to $\log\lambda$.
From the explicit expression for $V'$ and $W'$ it is clear that $f_x$ and $f_y$ are linearly dependent if and only if $\sin \phi = 0$.
   

\subsection{Generalized Potentials}\label{generalizedsection}
In \S\S\ref{ps2Darboux}--\ref{extended2normalized} we have reviewed the procedure which associates with every ps-surface a pair of potentials 
$\eta = (\eta^X_+(x,\lambda),\eta^Y_-(y,\lambda)$ given
by \eqref{duplusx} and \eqref{duminusy}.  In these
potentials the dependence on $\lambda$ is quite simple:
$\eta^X_+ = \lambda \xi^X_+$ and $\eta^Y_-= \lambda^{-1} \xi^Y_-$
for a pair of off-diagonal skew-Hermitian $2 \times 2$
matrices $\xi^X_+$ and $\xi^Y_-$ depending only on $x$ and $y$ respectively.

It is also possible to construct ps-surfaces from potentials which are more general, in the sense that more powers of $\lambda$ are involved.  To indicate that more than one power of $\lambda$ may be involved we will use the notation
\begin{equation}\label{genexpand}
\begin{aligned}
\eta^X_{\sharp}(x,\lambda) &= \lambda \xi_{1}^X(x) + \lambda^{0} \xi_{0}^X(x) + \mathcal{O} (\lambda^{-1}),\\
\eta^Y_\flat(y,\lambda) &= \lambda^{-1} \xi_1 ^Y(y) + \lambda^{0} \xi_{0}^Y(y) + \mathcal{O }(\lambda),\\
\end{aligned}\end{equation}
Note in particular that $\eta^X_{\sharp}$ is not necessarily in $\Lambda^+SU(2)$ and $\eta^Y_\flat$ is not necessarily in $\Lambda^-SU(2)$.  

\begin{defn}
We say that $\su(2)$-valued loops $\eta^X_\sharp(x)$ and $\eta^Y_\flat(y)$ are
{\em generalized potentials for a pseudospherical surface} if the extended frame $U$ of the surface satisfies splittings of the form
\begin{equation}
U = G^X_\sharp L_- = G^Y_\flat L_+, \label{generalsplit}
\end{equation}
for some $L_\pm(x,y) \in \Lambda^\pm SU(2)$ and $SU(2)$-valued matrices 
$G^X_\sharp(x,\lambda), G^Y_\flat(y,\lambda)$ which have $\eta^X_\sharp,\eta^Y_\flat$ as their respective Maurer-Cartan matrices, i.e.,
\begin{equation}\label{MCgees}
\dfrac{\di G^X_\sharp}{\di x} = G^X_\sharp \eta^X_\sharp, \qquad
\dfrac{\di G^Y_\flat}{\di y} = G^Y_\flat \eta^Y_\flat.
\end{equation}
\end{defn}
Note that we do not impose the initial condition \eqref{initialized} on $U$ or the other factors in \eqref{generalsplit}.

\begin{thm}\label{surfpot}
Let $D_1, D_2$ be intervals on the real line and
let $\eta^X_\sharp(x)$ and $\eta^Y_\flat(y)$ be two loops in $\mathfrak{sl}(2,\C)$, defined for $x\in D_1$ and $y\in D_2$ respectively, 
which are smooth in $x$ and $y$, analytic in $\lambda \in \C^*$,
and which have expansions of the form \eqref{genexpand} with nonvanishing $\xi^X_1$ and $\xi^Y_1$.  We also assume that these loops are $\su(2)$-valued
when $\lambda$ is real, and that they satisfy the twisting condition
given in \eqref{algloopdef}.
Then $\eta^X_\sharp,\eta^Y_\flat$ are generalized potentials
for a pseudospherical immersion defined for $(x,y) \in D = D_1\times D_2$.
\end{thm}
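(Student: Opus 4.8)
The plan is to build a candidate extended frame $U$ from the two given loops exactly as in the proof of Theorem \ref{converse}, and then to recognize its Maurer--Cartan form as that of a ps-surface. First I would integrate the linear systems \eqref{MCgees} with initial conditions $G^X_\sharp(0,\lambda)=G^Y_\flat(0,\lambda)=I$; standard ODE theory (with $\lambda$ as a parameter) yields solutions smooth in $x$, resp.\ $y$, analytic in $\lambda\in\C^*$, $SU(2)$-valued when $\lambda$ is real, and twisted in the sense of \eqref{algloopdef}. Hence $(G^Y_\flat)^{-1}G^X_\sharp\in\Lt SU(2)$, and since for $SU(2)$ the big cell is everything, the Birkhoff Decomposition Theorem applies. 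Writing $(G^Y_\flat)^{-1}G^X_\sharp=k_+k_-$ with $k_+\in\Lambda^+_*SU(2)$ and $k_-\in\Lambda^-SU(2)$, I set $L_+=k_+$, $L_-=k_-^{-1}$, and $U=G^Y_\flat L_+=G^X_\sharp L_-$, so that both splittings in \eqref{generalsplit} hold by construction with the prescribed Maurer--Cartan matrices. (Note that we do not, and need not, impose \eqref{initialized}.)

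The heart of the argument is to show that $\w=U^{-1}dU$ has the shape \eqref{su2sys}. I would compute $\w$ from each splitting. Because $G^Y_\flat$ is independent of $x$, the $dx$-part computed from $U=G^Y_\flat L_+$ equals $L_+^{-1}(\di L_+/\di x)$ and so contains only nonnegative powers of $\lambda$; computed from $U=G^X_\sharp L_-$ it equals $L_-^{-1}\eta^X_\sharp L_-+L_-^{-1}(\di L_-/\di x)$, which, since $\eta^X_\sharp$ has highest power $\lambda^1$ and $L_-\in\Lambda^-SU(2)$, has only powers $\le 1$. Hence the $dx$-part of $\w$ involves only $\lambda^0,\lambda^1$; dually the $dy$-part involves only $\lambda^{-1},\lambda^0$. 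The twisting condition forces the $\lambda^0$ coefficients to be diagonal and the $\lambda^{\pm1}$ coefficients off-diagonal, while reality for real $\lambda$ makes them skew-Hermitian. The hypotheses that $\xi^X_1,\xi^Y_1$ are nowhere zero guarantee that the off-diagonal $\lambda^{\pm1}$ coefficients never vanish, their magnitudes being controlled by $|\xi^X_1(x)|$ and $|\xi^Y_1(y)|$.

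At this point $\w$ looks like \eqref{su2sys} except that (i) the off-diagonal $\lambda^{\pm1}$ coefficients are scaled by $|\xi^X_1|,|\xi^Y_1|$ and carry nonstandard phases, and (ii) the $dy$-part may carry an extra diagonal $\lambda^0$ term absent from \eqref{su2sys}. I would remove these by a diagonal gauge $U\mapsto U\,\mathrm{diag}(e^{\ri\rho},e^{-\ri\rho})$ (which is harmless for \eqref{generalsplit}, since the $\lambda$-independent factor is simply absorbed into $L_\pm$) together with reparametrizations $x\mapsto\tilde x(x)$, $y\mapsto\tilde y(y)$ normalizing the off-diagonal magnitudes to $\tfrac12$; these reparametrizations exist precisely because $\xi^X_1,\xi^Y_1$ do not vanish. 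The step I expect to be the main obstacle is that a single gauge $\rho$ cannot, a priori, both standardize the $dx$-phase and cancel the spurious diagonal $dy$-term. The resolution is that these are not independent conditions: the Maurer--Cartan equation (automatic, since $U$ is group-valued) relates the $y$-derivative of the $dx$-phase to the diagonal $dy$-coefficient, so the gauge chosen to standardize the $dx$-phase cancels the unwanted diagonal term at the same time.

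Once $\w$ has been brought to the form \eqref{su2sys} for a function $\phi(x,y)$, the Maurer--Cartan equation for every $\lambda$ is equivalent to $\phi$ solving the sine-Gordon equation \eqref{SGE}, and the Sym formula \eqref{sym} then produces a ps-surface with extended frame $U$, immersed away from $\sin\phi=0$, exactly as in the discussion following Theorem \ref{converse}. Since the condition $K=-1$ is invariant under reparametrization, the surface is pseudospherical in the original coordinates as well, and the splittings \eqref{generalsplit} exhibit $\eta^X_\sharp,\eta^Y_\flat$ as its generalized potentials.
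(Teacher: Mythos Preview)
Your argument is correct and follows the same overall strategy as the paper---integrate to get $G^X_\sharp,G^Y_\flat$, Birkhoff-split, and read off the shape of $U^{-1}dU$---but the bookkeeping differs in one place.  The paper chooses the \emph{other} normalization of the Birkhoff splitting: it first multiplies by a diagonal $\lambda$-independent factor $T^X$ (built from the phase of $\xi^X_1$) and then splits $(G^Y_\flat)^{-1}G^X_\sharp T^X = L_+(L_-)^{-1}$ with $L_-\in\Lambda^-_*SU(2)$.  Because $L_-$ is normalized at $\lambda=\infty$, the $dy$-part $(L_-)^{-1}\partial_yL_-$ has no $\lambda^0$ term, and the $\lambda^0$ piece of the $dx$-part appears directly as $(T^X)^{-1}\partial_xT^X$; this yields the stretched system \eqref{constructUlinsys} without any after-the-fact gauge.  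You instead put the star on $L_+$, which kills the $\lambda^0$ term in the $dx$-part (in fact your $dx$-part is purely $\lambda^1$, not $\lambda^0+\lambda^1$ as you wrote) and creates a spurious diagonal $\lambda^0$ term in the $dy$-part; you then remove it by the diagonal gauge $U\mapsto UD$, and your observation that the $\lambda^1$-component of the Maurer--Cartan equation forces $\partial_y(\arg c)=-2\mu$ (so the phase gauge automatically cancels the diagonal $dy$-term) is exactly right.  The two routes are equivalent---your post-hoc gauge $D$ plays the role of the paper's pre-inserted $T^X$ times $L_{-0}$---and the paper likewise does not reparametrize but simply works with the stretched equation $\phi_{xy}=a(x)b(y)\sin\phi$, which you may do as well.
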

Note that the twisting condition implies that the coefficients of 
even powers of $\lambda$ take value in $\h_0$, the diagonal subalgebra of
$\su(2)$, while the coefficients of odd powers take value in $\h_1$,
the subspace of off-diagonal matrices.

\begin{proof}
Let $G^X_\sharp, G^Y_\flat$ be as in \eqref{MCgees}.
Suppose that
$$\xi^X_1 = \dfrac{\ri}2 a(x)  \begin{pmatrix} 
0 & e^{-\ri\alpha(x)} \\
e^{\ri\alpha(x)} & 0 \end{pmatrix}$$
for a nonzero real function $a(x)$, and let
$$T^X = \begin{pmatrix}
e^{-\ri\alpha(x)/2} & 0 \\
0 & e^{\ri\alpha(x)/2}
\end{pmatrix}.
$$
Construct the Birkhoff splitting
\begin{equation}\label{splitgees}
(G^Y_\flat)^{-1} G^X_\sharp T^X = L_+ (L_-)^{-1},
\end{equation}
where $L_+(x,y) \in \Lambda^+ SU(2)$ and $L_-(x,y) \in \Lambda^-_*SU(2)$,
and let
$$U = G^X_\sharp T^X L_- = G^Y_\flat L_+.$$
We compute
$$U^{-1} \dfrac{\di U }{\di y} = (L_-)^{-1}\dfrac{\di L_-}{\di y}
= (L_+)^{-1} \left(\eta^Y_\flat L_+ + \dfrac{\di L_+}{\di y}\right).$$
The right-hand side has at most one negative power of $\lambda$, while the middle member
is in $\Lambda^-_* \su(2)$.  Thus, 
if we write $L_+= L_{+0}(x,y)+\mathcal{O}(\lambda)$, then 
$U^{-1} \dfrac{\di U }{\di y} = (L_{+0})^{-1} \xi^Y_1 L_{+0}$
and is $\h_1$-valued. Similarly, we compute
$$U^{-1} \dfrac{\di U }{\di x} = (L_+)^{-1}\dfrac{\di L_+}{\di x}
= (\widetilde L_-)^{-1} \left(\eta^X_\sharp \widetilde L_- + \dfrac{\di \widetilde L_-}{\di x}\right),$$
where $\widetilde L_- = T^X L_-$. 
The right-hand side has at most one positive power of $\lambda$, while the middle
member has only non-negative powers of $\lambda$, so we have
$$U^{-1} \dfrac{\di U }{\di x} = A_0 + \lambda A_1,$$
where $A_0$ depends on $L_+$, but 
$$A_1 = (T^X)^{-1} \xi_1^X T^X = \dfrac{\ri}2 a(x) \begin{pmatrix}0 & 1 \\ 1 & 0 \end{pmatrix}.$$

Suppose that 
$$\xi^Y_1 = \dfrac{\ri}{2} \begin{pmatrix} 0 & \rho(y) \\ \overline{\rho(y)} & 0 \end{pmatrix},
\qquad L_{+0} = \begin{pmatrix} e^{\ri \psi(x,y)} & 0 \\ 0 & e^{-\ri \psi(x,y)} \end{pmatrix}.
$$
Then $U$ satisfies the linear system
\begin{equation}\label{constructUlinsys}
\dfrac{\di U}{\di x} = \dfrac{\ri}2 a(x) U 
\begin{pmatrix} -\kappa(x,y) & \lambda \\  
\lambda & \kappa(x,y) \end{pmatrix}, \qquad
\dfrac{\di U}{\di y} = \dfrac{\ri}{2\lambda} U
\begin{pmatrix} 0 & \rho e^{-2\ri \psi} \\
\overline{\rho} e^{2\ri \psi} & 0 \end{pmatrix},
\end{equation}
for some function $\kappa$.  Set $\rho(y) = -b(y) e^{\ri\beta(y)}$ for a real-valued function $b(y)>0$.  Then the second 
equation in \eqref{constructUlinsys} agrees with the second equation in \eqref{su2sys}, with 
the speed $b(y)$ inserted, for
$$\phi(x,y) = \beta(y) - 2\psi(x,y).$$
Then the compatibility condition for \eqref{constructUlinsys} implies that
$\kappa = (a(x))^{-1} \di \phi /\di x$.  Note that $\phi$ satisfies a version 
of the sine-Gordon equation:
$$\phi_{xy} = a(x) b(y) \sin\phi.$$

The existence of a pseudospherical surface with extended frame $U$ now follows by integration.
\end{proof}

\begin{thm}\label{gaugetheorem}
Assume $\eta = (\eta^X_{\sharp}(x), \eta^Y_{\flat}(y))$ is a generalized potential pair of some ps-surface, satisfying the hypotheses of Theorem \ref{surfpot}.
Given any
$Q^X_-(x,\lambda)$ \linebreak $\in \Lambda^-SU(2)$ and $Q^Y_+(y,\lambda) \in \Lambda^+SU(2)$
defined for $x\in D_1$ and $y\in D_2$, let
\begin{equation}\label{gaugeit}
\begin{aligned}
\widetilde{\eta}^X_{\sharp} &= 
(Q_-^X)^{-1} \eta^X_{\sharp} Q_-^X + (Q_-^X)^{-1} \frac{d}{dx} Q_-^X,\\
\widetilde{\eta}^Y_{\flat} &= 
(Q_+^Y)^{-1} \eta^Y_{\flat} Q_+^Y + (Q_+^Y)^{-1} \frac{d}{dy} Q_+^Y.
\end{aligned}
\end{equation}
Then $\widetilde{\eta}=(\widetilde{\eta}^X_{\sharp},\widetilde{\eta}^Y_{\flat})$ is a generalized potential for the same surface.
Moreover, any two generalized potentials for the same surface are related by some gauge transformation of the form \eqref{gaugeit}.
\end{thm}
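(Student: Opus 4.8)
The plan is to prove both assertions by direct Maurer--Cartan bookkeeping, using the single observation that a gauge transformation of the form \eqref{gaugeit} is precisely what happens to a Maurer--Cartan matrix under right-multiplication of its $G$-factor. For the forward direction I would set $\widetilde{G}^X_\sharp := G^X_\sharp Q^X_-$ and $\widetilde{G}^Y_\flat := G^Y_\flat Q^Y_+$. Since $G^X_\sharp$ and $Q^X_-$ each depend only on $(x,\lambda)$ (and the $Y$-factors only on $(y,\lambda)$), these products are again one-variable loops, and a one-line Leibniz computation from \eqref{MCgees} shows their Maurer--Cartan matrices are exactly the right-hand sides of \eqref{gaugeit}, namely $\widetilde\eta^X_\sharp$ and $\widetilde\eta^Y_\flat$. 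It then remains to exhibit the two splittings of $U$ required by the Definition: writing $G^X_\sharp = \widetilde{G}^X_\sharp (Q^X_-)^{-1}$ and substituting into $U = G^X_\sharp L_-$ gives $U = \widetilde{G}^X_\sharp \widetilde{L}_-$ with $\widetilde{L}_- := (Q^X_-)^{-1} L_-$, which lies in $\Lambda^- SU(2)$ because both $Q^X_-$ and $L_-$ do; the identical argument with $Q^Y_+ \in \Lambda^+ SU(2)$ produces $U = \widetilde{G}^Y_\flat \widetilde{L}_+$ with $\widetilde{L}_+ \in \Lambda^+ SU(2)$. Thus $\widetilde\eta$ is a generalized potential for the same extended frame $U$.

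For the converse I would suppose $\eta = (\eta^X_\sharp,\eta^Y_\flat)$ and $\hat\eta = (\hat\eta^X_\sharp,\hat\eta^Y_\flat)$ are generalized potentials for the same surface, so that the one frame $U$ admits splittings $U = G^X_\sharp L_- = G^Y_\flat L_+$ as in \eqref{generalsplit}, and likewise $U = \hat{G}^X_\sharp \hat{L}_- = \hat{G}^Y_\flat \hat{L}_+$. The key step is to define the candidate gauge matrices $Q^X_- := (G^X_\sharp)^{-1} \hat{G}^X_\sharp$ and $Q^Y_+ := (G^Y_\flat)^{-1} \hat{G}^Y_\flat$ and to check they sit in the correct half-loop groups and depend on the correct single variable. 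One plays the two splittings against each other: on the one hand $Q^X_-$ is visibly a function of $(x,\lambda)$ alone, being assembled from the two $X$-factors; on the other hand, eliminating $U$ via $G^X_\sharp = U L_-^{-1}$ and $\hat{G}^X_\sharp = U \hat{L}_-^{-1}$ shows $Q^X_- = L_- \hat{L}_-^{-1} \in \Lambda^- SU(2)$. Symmetrically $Q^Y_+ = L_+ \hat{L}_+^{-1} \in \Lambda^+ SU(2)$ depends only on $(y,\lambda)$. Since now $\hat{G}^X_\sharp = G^X_\sharp Q^X_-$ and $\hat{G}^Y_\flat = G^Y_\flat Q^Y_+$, the very same Maurer--Cartan computation as in the forward direction shows that applying \eqref{gaugeit} to $\eta$ with this pair $(Q^X_-,Q^Y_+)$ yields exactly $\hat\eta$, so the two potentials are gauge-related.

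The routine ingredient is the Leibniz-rule Maurer--Cartan calculation, which is identical in both directions, so I would prove it once as a standalone identity. The only genuine subtlety---and the main obstacle---is the simultaneous verification that each gauge matrix lies in the appropriate subgroup $\Lambda^\mp SU(2)$ \emph{and} depends on only one of the two variables; both facts fall out at once from the device of writing the gauge matrix in the two ways afforded by the pair of splittings of $U$. I would also record, for the forward direction, that because $Q^X_-$ and $Q^Y_+$ are $SU(2)$-valued and twisted, the gauged loops $\widetilde\eta^X_\sharp,\widetilde\eta^Y_\flat$ remain $\su(2)$-valued and twisted, so that $\widetilde\eta$ genuinely satisfies the requirements of the Definition.
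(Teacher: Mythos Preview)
Your argument is correct and follows essentially the same route as the paper: set $\widetilde G = GQ$ and verify the splittings for the forward direction, and for the converse read off $Q = L_-\hat L_-^{-1} = (G^X_\sharp)^{-1}\hat G^X_\sharp$ from the two splittings of $U$. The one point you skip, which the paper addresses explicitly, is that two generalized potentials for the same \emph{surface} need not a priori split the same extended frame $U$; the paper handles this by invoking the 1--1 correspondence of \S\ref{one2one} and fixing the frame at a basepoint before comparing the two splittings.
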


\begin{proof}
To prove the first assertion, note that $\widetilde G^X_\sharp = G^X_\sharp Q^X_-$
and $\widetilde G^Y_\flat = G^Y_\flat Q^Y_+$ satisfy \eqref{MCgees} with 
$\eta$ replaced by $\widetilde{\eta}$.  Then the corresponding
splitting in \eqref{splitgees} is satisfied by
$\widetilde L^+ = (Q_+^Y)^{-1} L^+$ and $\widetilde L_- = (T^X)^{-1}Q^X_{-0} (Q^X_-)^{-1} T^X L_-$, where $Q^X_{-0}$ denotes the term in $Q^X_-$ of
order zero in $\lambda$.  Then $\widetilde U = \widetilde G^Y_\flat \widetilde L_+ = U$.

For the proof of the second assertion we recall from \S\ref{one2one} that normalized potentials are 
in 1-1 relation to local ps-immersions up to rigid motions.  Therefore, if two potentials  $\widetilde{\eta}$ and  $\eta$ induce the same surface, then, after fixing the frame at some basepoint, 
we can assume that they induce the same extended frame.  Comparing now the defining equations for the extended frame we obtain (for example)
\[U=G^X_\sharp L_- = \widetilde{G}^X_\sharp \widetilde{L}_-\]
where
\[\frac{\partial G^X_\sharp}{\partial x} = G^X_\sharp \eta^X_\sharp, \qquad
\frac{\partial \widetilde{G}^X_\sharp}{\partial x} = \widetilde{G}^X_\sharp \widetilde{\eta}^X_\sharp.\]
Letting $Q^X_- = L_-\widetilde{L}^{-1}_-\restr_{y=0}$, we have
$\widetilde{G}^X_\sharp = G^X_\sharp Q^X_-,$ and thus
$$\widetilde{\eta}^X_\sharp 
=  (\widetilde G^X_\sharp)^{-1} \dfrac{\di \widetilde G^X_\sharp}{\di x}
=
(Q^X_-)^{-1} \eta^X_\sharp Q^X_- + 
(Q^X_-)^{-1} \frac{\partial Q^X_-}{\partial x}.  \qedhere 
$$ 
\end{proof}

The result above implies a somewhat surprising but useful

\begin{cor}
Assume that $U(x,y;\lambda)$ is the extended frame of some ps-surface.
Then  
$$\eta^X_\sharp(x,\lambda) := \left.U^{-1}\dfrac{\di U}{\di x}\right|_{y=x},
\qquad \eta^Y_\flat(y,\lambda) := \left.U^{-1}\dfrac{\di U}{\di y}\right|_{x=y}
$$
are generalized potentials for the same surface.
\end{cor}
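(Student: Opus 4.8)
My plan is to recognize $(\eta^X_\sharp,\eta^Y_\flat)$ as a pair satisfying the hypotheses of Theorem~\ref{surfpot}, and then to identify the resulting surface with the original one via the uniqueness results of \S\ref{one2one}. First I would substitute \eqref{su2sys} into the two definitions and restrict to the diagonal, obtaining
\begin{equation*}
\eta^X_\sharp(x,\lambda)=\frac{\ri}{2}\begin{bmatrix}-\phi_x(x,x)&\lambda\\\lambda&\phi_x(x,x)\end{bmatrix},\qquad
\eta^Y_\flat(y,\lambda)=-\frac{\ri}{2\lambda}\begin{bmatrix}0&e^{\ri\phi(y,y)}\\e^{-\ri\phi(y,y)}&0\end{bmatrix},
\end{equation*}
where $\phi$ is the angle function of the given surface. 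These loops are analytic in $\lambda\in\C^*$, twisted, and $\su(2)$-valued for real $\lambda$ (being diagonal restrictions of the $\su(2)$-valued form $U^{-1}dU$), and the leading coefficients $\xi_1^X=\frac{\ri}{2}\left(\begin{smallmatrix}0&1\\1&0\end{smallmatrix}\right)$ and $\xi_1^Y=-\frac{\ri}{2}\left(\begin{smallmatrix}0&e^{\ri\phi(y,y)}\\e^{-\ri\phi(y,y)}&0\end{smallmatrix}\right)$ are nowhere zero. Thus Theorem~\ref{surfpot} applies verbatim (with speeds $a(x)\equiv b(y)\equiv1$), and $(\eta^X_\sharp,\eta^Y_\flat)$ are generalized potentials for \emph{some} ps-surface.

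The crux is to show that this surface is the original one, for which I would appeal to the 1:1 correspondence of \S\ref{one2one}. The reconstructed angle $\widetilde\phi$ again solves $\widetilde\phi_{xy}=\sin\widetilde\phi$, and the data built into the potentials is precisely the Cauchy data of $\phi$ along the line $y=x$: the value $\phi(y,y)$ is carried by $\xi_1^Y$ and the derivative $\phi_x(x,x)$ is carried by $\xi_0^X$, from which the transverse derivative follows via $\tfrac{d}{dx}\phi(x,x)=\phi_x(x,x)+\phi_y(x,x)$. Since the diagonal is non-characteristic for the sine-Gordon equation, the uniqueness theorem for its (non-characteristic) Cauchy problem would force $\widetilde\phi=\phi$, whence the two surfaces coincide up to a rigid motion and $(\eta^X_\sharp,\eta^Y_\flat)$ are generalized potentials for the same surface.

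The main obstacle is exactly this identification step. It requires tracing the construction in the proof of Theorem~\ref{surfpot} carefully enough to confirm that the reconstructed $\widetilde\phi$ really inherits the full diagonal Cauchy data of $\phi$, rather than merely satisfying the same equation; equivalently, one must control the degree-zero factor $L_{+0}$, hence the function $\psi$, produced by the Birkhoff splitting along $y=x$. I expect this to be the delicate part, for the following reason: unlike the axis-based normalization of \S\ref{extended2normalized}, the literal frame $U$ need \emph{not} admit a splitting $U=G^X_\sharp L_-$ with $(G^X_\sharp)^{-1}(G^X_\sharp)_x=\eta^X_\sharp$ and $L_-\in\Lambda^-SU(2)$ (the degree-zero part of such an $L_-$ cannot be held in the centralizer of $\xi_1^X$ as $x$ varies); rather it is a \emph{congruent} frame, the one supplied by Theorem~\ref{surfpot}, that realizes the required splitting, and matching it to $U$ rests on the uniqueness input above. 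Conceptually this is what makes the statement a corollary of Theorem~\ref{gaugetheorem}: once the same surface is reached, the two generalized potentials are automatically related by a gauge of the form \eqref{gaugeit}.
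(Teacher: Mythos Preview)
Your approach is more circuitous than necessary, and the gap you yourself flag is real: Theorem~\ref{surfpot} does not come with an initial-value statement analogous to Theorem~\ref{converse}, so verifying that the reconstructed $\widetilde\phi$ inherits the full diagonal Cauchy data of $\phi$ would require reopening that proof and tracking $L_{+0}$ along $y=x$. You have not done this, and it is not a triviality.

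The paper sidesteps the identification problem entirely by running Theorem~\ref{gaugetheorem} in the \emph{opposite} direction from the one you invoke. You treat the gauge relation as a consequence (``once the same surface is reached, the two potentials are gauge-related''); the paper instead \emph{exhibits} the gauge directly and uses the first assertion of Theorem~\ref{gaugetheorem} to conclude the surfaces agree. Concretely, from the Birkhoff splitting $U=U^X_+V_-$ one has, for all $(x,y)$,
\[
U^{-1}\dfrac{\partial U}{\partial x}=(V_-)^{-1}\eta^X_+ V_- + (V_-)^{-1}\dfrac{\partial V_-}{\partial x},
\]
so restricting to $y=x$ and setting $Q^X_-:=V_-\restr_{y=x}\in\Lambda^-SU(2)$ already puts $\eta^X_\sharp$ in the form \eqref{gaugeit} relative to the normalized potential $\eta^X_+$; likewise $Q^Y_+:=V_+\restr_{x=y}$ works for $\eta^Y_\flat$. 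No PDE uniqueness on the diagonal, no control of $\psi$, no appeal to Theorem~\ref{surfpot} is needed. Your remark that ``the literal frame $U$ need not admit a splitting $U=G^X_\sharp L_-$'' is beside the point: what is required is only that $\eta^X_\sharp$ be a gauge of $\eta^X_+$, and the formula above provides that gauge on the nose.
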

\begin{proof}
We can show that $(\eta^X_\sharp,\eta^Y_\flat)$ differ from the normalized
potential $(\eta^X_+, \eta^Y_-)$ of $U$ by a gauge transformation \eqref{gaugeit}.
For example, from \eqref{usplit} and \eqref{duplusx} we have
$$U^{-1}\dfrac{\di U}{\di x} = (V_-)^{-1} \eta^X_+ V_- + (V_-) \dfrac{\di V_-}{\di x},$$
where $V_-$ depends on $x$ and $y$.  Letting $Q^X_- = V_-\restr_{y=x}$, we have
$$\eta_\sharp^X = (Q_-^X)^{-1} \eta_+^X Q_-^X + (Q_-^X)^{-1} \frac{\partial Q_-^X}{\partial x}. \qedhere $$
\end{proof}

\section{Symmetric Ps-Surfaces} \label{sec:startsym}
\def\su{\mathfrak{su}}
\def\w{\omega}
\def\wL{\omega^\lambda}
\def\FL{F^\lambda}

\subsection{Symmetric Ps-Surfaces and Frames}
Let $f:D \rightarrow \R^3$ be a ps-immersion, where $D \subset \R^2$ is an open set.  We assume that $f$ is nondegenerate at each point in $D$, and that the $x$- and $y$-coordinate curves are asymptotic lines on the surface $f(D)$
(not necessarily of unit speed).  By convention, we associate an oriented orthonormal frame $F(x,y)$ to $f$ in a unique way, so that the first column of $F$ is the unit vector in the direction of $f_x$, and the third column is the unit vector in the direction of the surface normal $\mathbf{n} = f_x \times f_y$.

The most natural notion of a symmetry of an immersion $f$
seems to be that there exists some rigid motion $R$ such that
$R \circ f(D) = f(D)$.  
Around any 
point $p_0$ in this subset there exists an open set $U\subset D$ and map $\gamma :U \to D$ such that $f \circ \gamma (p) = R \circ f(p)$ and $\gamma$ is a diffeomorphism onto its image.
In many cases (and under natural assumptions, like completeness of the pullback metric on $D$) such a $\gamma$ exists globally.
We therefore will use from now on the \textbf{basic assumption} that there is a rigid motion $R:\R^3 \to \R^3$ and a diffeomorphism
$\gamma:D \to D$ such that
\begin{equation}\label{phigammaR}
f \circ \gamma = R \circ f.
\end{equation}

\begin{prop}
Let $R' \in O(3)$ be the linear part of $R$.  Then there is a matrix $K(x,y) \in O(3)$,
with $\det K = \det R'$, such
that
\begin{equation}\label{FgammaR}
F \circ \gamma = R'\,F \, K.
\end{equation}
Moreover, the Maurer-Cartan form $K^{-1} dK$ takes value in the subalgebra
$$\k_0 = \left\{ \begin{bmatrix} 0 & a & 0 \\ -a & 0 & 0 \\ 0 & 0 & 0\end{bmatrix}\right\}
\subset \mathfrak{so}(3).$$
\end{prop}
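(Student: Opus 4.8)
The plan is to define $K$ explicitly, check that it lands in $O(3)$, and then read off its block structure from the geometric meaning of the columns of $F$, which is where the real content lies.

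First I would write $R(v) = R'v + t$ for the translation part $t$, so that \eqref{phigammaR} reads $f(\gamma(p)) = R'f(p) + t$ and $f$, $f\circ\gamma$ visit the same surface point up to the shift $t$. Since $F$ is $SO(3)$-valued, I simply \emph{define}
\[
K := (R'F)^{-1}(F\circ\gamma) = F^{-1}(R')^{-1}(F\circ\gamma),
\]
which manifestly satisfies \eqref{FgammaR}. As a product of orthogonal matrices, $K(x,y)\in O(3)$, and $\det K = (\det R')^{-1}(\det F)^{-1}\det(F\circ\gamma) = (\det R')^{-1} = \det R'$, because $\det F \equiv 1$ and $\det R' = \pm1$. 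This settles the first two assertions.

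The heart of the matter is the third column of $K$. I would use that the third column of $F$ is the unit normal $\mathbf{n}$, a quantity intrinsic to the surface up to sign, and hence transforming simply under the ambient isometry $R$. Differentiating \eqref{phigammaR} shows that the tangent plane at $f(\gamma(p))$ is the $R'$-image of the tangent plane at $f(p)$, so the unit normals satisfy $\mathbf{n}(\gamma(p)) = \epsilon(p)\,R'\,\mathbf{n}(p)$ for a sign $\epsilon(p)\in\{\pm1\}$. Since $\epsilon$ is a continuous $\{\pm1\}$-valued function, it is constant on each component of $D$; write $\epsilon$ for its value. The third column of $F\circ\gamma$ is $\mathbf{n}(\gamma(p))$, so feeding the relation above into the definition of $K$ gives
\[
(R'F)^{-1}\mathbf{n}(\gamma(p)) = \epsilon\,(R'F)^{-1}R'\,\mathbf{n}(p) = \epsilon\,F^{-1}\mathbf{n}(p) = \epsilon\,(0,0,1)^{\mathsf T},
\]
i.e. the third column of $K$ is $\epsilon$ times the third standard basis vector. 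I expect this normal-transformation step to be the main obstacle, since here one must argue that the sign $\epsilon$ is globally well-defined and constant, not merely defined pointwise.

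Finally, orthonormality of the columns of $K\in O(3)$ forces the first two columns to be orthogonal to $(0,0,\epsilon)^{\mathsf T}$, hence to have vanishing third entry, so
\[
K = \begin{bmatrix} A & 0 \\ 0 & \epsilon\end{bmatrix}, \qquad A(x,y)\in O(2).
\]
Because $\epsilon$ is constant, $dK$ is block-diagonal and $K^{-1}dK = \begin{bmatrix} A^{-1}dA & 0 \\ 0 & 0\end{bmatrix}$; since $A\in O(2)$ we have $A^{-1}dA = A^{\mathsf T}dA \in \mathfrak{so}(2)$, a $1$-form of the shape $\begin{bmatrix} 0 & a \\ -a & 0\end{bmatrix}$. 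Reassembling the blocks shows $K^{-1}dK$ takes values in $\k_0$, as claimed.
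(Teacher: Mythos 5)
Your proof is correct, and it is essentially the argument the paper leaves implicit: the proposition is stated without a formal proof, the authors instead recording an explicit formula for $K$. Your route --- define $K:=(R'F)^{-1}(F\circ\gamma)$, observe it is orthogonal with the right determinant, show its third column is $\epsilon\,(0,0,1)^{\mathsf T}$ by tracking how the unit normal transforms under $R'$, and then read off the block structure from orthonormality of the columns --- is the natural way to establish the statement, and you correctly identify the normal-transformation step (including the local constancy of the sign $\epsilon$, which needs $D$ connected) as the crux. The paper's version of this is the explicit formula $K=\left[\begin{smallmatrix} ZJ^{-1}(Z\circ\gamma)^{-1} & 0\\ 0 & \epsilon\end{smallmatrix}\right]$, where $Z$ is the upper-triangular Gram--Schmidt factor relating $(f_x,f_y)$ to the frame and $J$ is the Jacobian of $\gamma$; differentiating \eqref{phigammaR} and comparing the two expressions for $[(f\circ\gamma)_x,(f\circ\gamma)_y]$ shows this block is exactly your $A(x,y)\in O(2)$ (and gives $\epsilon=\det(R')\,\mathrm{sign}(\det J)$, making the constancy of $\epsilon$ manifest rather than an appeal to continuity). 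What the paper's formula buys is computational access to $K$ in terms of the metric data and $\gamma$, which is used later; what your argument buys is a clean, self-contained proof of the qualitative statement. The only cosmetic caveat is that you write $\mathbf n$ for the unit normal whereas the paper reserves $\mathbf n$ for $f_x\times f_y$, but this does not affect the argument.
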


In fact, the matrix $K(x,y)$ can be calculated as follows: Let $\epsilon = \pm1$
be such that $R'{\mathbf n} = \epsilon (\mathbf n \circ \gamma)$, let $J$ be the Jacobian of $\gamma$, and let $Z$ be the $2 \times 2$ upper triangular matrix such that
$$[f_x, f_y] = F \begin{bmatrix}Z \\ \begin{array}{cc} 0 & 0 \end{array}\end{bmatrix}.$$
Then
$$
K(x,y) = \begin{bmatrix} Z J^{-1} (Z \circ \gamma)^{-1} & 0 \\ 0 & \epsilon \end{bmatrix}.
$$

From now on, assume that $R'$ and $K(x,y)$ are $SO(3)$-valued, i.e., the rigid motion is proper.

\subsection{Extended Frames and Monodromy}
Note that $\gamma$ preserves asymptotic coordinates.  For, it follows from
\eqref{phigammaR} that 
$$(f \circ \gamma)_x = R' f_x,\quad (f \circ \gamma)_y = R' f_y,\quad
(f \circ \gamma)_{xx} = R' f_{xx}.$$  
Hence $(f \circ \gamma)_x$ is an asymptotic vector if and only if $\det[R' f_x,R' f_y,R' f_{xx}]=\det R' \det[f_x, f_y, f_{xx}]=0$, i.e., $f_x$ is an asymptotic vector.  The same is true for $(f \circ \gamma)_y$.  Thus,
$\gamma$ either preserves or switches the $(x,y)$ coordinates.

\begin{prop}\label{chiprop}  Let $F^\lambda$ be the extended frame for the given ps-surface.
If $\gamma$ takes $x$-coordinate curves to $x$-coordinate curves
and $y$-coordinate curves to $y$-coordinate curves, then there is an $SO(3)$-valued
function $\chi(\lambda)$ such that
\begin{equation}\label{Fgammachi}
\FL \circ \gamma = \chi\, \FL\, K,
\end{equation}
and moreover $K$ depends only on $x$.
If $\gamma$ switches the $x$- and $y$-coordinate curves, then we have
\begin{equation}\label{Fgammachirev}
F^\lambda \circ \gamma = \chi\, F^{1/\lambda} \, K.
\end{equation}
\end{prop}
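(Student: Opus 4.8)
The plan is to reduce the statement to an infinitesimal (Maurer--Cartan) identity in the spectral parameter and to let $\chi(\lambda)$ emerge as the constant of integration. Write $\omega^\lambda = (\FL)^{-1}d\FL$, which by \eqref{ftlsys} (equivalently \eqref{su2sys}) has the form $\omega^\lambda = A_0\,dx + \lambda A_1\,dx + \lambda^{-1}B_1\,dy$, where $A_0,A_1,B_1$ depend only on $(x,y)$ and, by the twisting condition, $A_0$ is diagonal (lies in $\h_0$) while $A_1,B_1$ are off-diagonal (lie in $\h_1$). Taking the Maurer--Cartan form of the preceding Proposition's identity \eqref{FgammaR}, $F\circ\gamma=R'FK$ at $\lambda=1$, and using that $R'$ is constant so $dR'=0$, yields the gauge relation
$$\gamma^*\omega^\lambda = \Ad(K^{-1})\omega^\lambda + K^{-1}dK \qquad(\lambda=1),$$
where $K$ is the $\lambda$-independent matrix of that Proposition, with $K^{-1}dK$ valued in $\k_0$; in the $SU(2)$ picture $K$ is diagonal, so $\Ad(K^{-1})$ preserves $\h_0$ and $\h_1$ separately.

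First I would show that this gauge relation in fact holds for \emph{every} $\lambda$, which is the crux of the argument. In the coordinate-preserving case $\gamma(x,y)=(a(x),b(y))$, so $\gamma^*dx=a'\,dx$ and $\gamma^*dy=b'\,dy$; hence $\gamma^*\omega^\lambda$ again has the shape (diagonal, $dx$)~$+\,\lambda\,$(off-diagonal, $dx$)~$+\,\lambda^{-1}$(off-diagonal, $dy$), and the same is true of $\Ad(K^{-1})\omega^\lambda+K^{-1}dK$ because $\Ad(K^{-1})$ respects the grading and $K^{-1}dK\in\k_0$ is diagonal and $\lambda$-independent. Both sides are therefore Laurent polynomials $c_{-1}\lambda^{-1}+c_0+c_1\lambda$ in which $c_0$ is diagonal and carries only $dx$, $c_1$ is off-diagonal and carries only $dx$, and $c_{-1}$ is off-diagonal and carries only $dy$. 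Since the summands $\h_0\,dx$, $\h_1\,dx$, and $\h_1\,dy$ are linearly independent as $\su(2)$-valued $1$-form components, the $\lambda=1$ identity forces equality of the three coefficients separately; as these coefficients do not involve $\lambda$, reassembling them establishes the gauge relation for all $\lambda$.

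With the gauge relation in hand, $\FL\circ\gamma$ and $\FL K$ have the same left-logarithmic derivative $\gamma^*\omega^\lambda$, so they differ by a left factor $\chi(\lambda)\in SO(3)$ that is constant in $(x,y)$ but may depend on $\lambda$; this is \eqref{Fgammachi}, with $\chi(1)=R'$. To see that $K$ depends only on $x$, I would isolate the diagonal ($\k_0$) part of the gauge relation: both $A_0\,dx$ and its pullback $\gamma^*(A_0\,dx)$ carry only $dx$, so $K^{-1}dK=\gamma^*(A_0\,dx)-\Ad(K^{-1})(A_0\,dx)$ has no $dy$-component, forcing the rotation angle of $K$ to be independent of $y$.

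Finally, in the coordinate-swapping case $\gamma(x,y)=(a(y),b(x))$ we have $\gamma^*dx=a'\,dy$ and $\gamma^*dy=b'\,dx$, so the pullback interchanges the differentials: the $\lambda^{+1}$ data moves onto $dy$ and the $\lambda^{-1}$ data onto $dx$. Thus $\gamma^*\omega^\lambda$ carries the power-of-$\lambda$ structure of $\omega^{1/\lambda}$ rather than of $\omega^\lambda$, and the same grading argument---again anchored at $\lambda=1$, which is fixed by $\lambda\mapsto1/\lambda$---gives $\gamma^*\omega^\lambda=\Ad(K^{-1})\omega^{1/\lambda}+K^{-1}dK$; integrating produces $\FL\circ\gamma=\chi(\lambda)\,F^{1/\lambda}K$, which is \eqref{Fgammachirev}. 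The main obstacle is the bootstrap in the second paragraph: everything hinges on $K$ being $\lambda$-independent with $K^{-1}dK\in\k_0$, so that conjugation by $K$ respects the $\h_0/\h_1$ grading and the single identity at $\lambda=1$ propagates to all $\lambda$.
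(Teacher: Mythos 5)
Your proof is correct and follows essentially the same route as the paper: both start from the $\lambda=1$ gauge identity obtained by differentiating $F\circ\gamma=R'FK$, split it into the $\h_0$ and $\h_1$ parts and then into $dx$- and $dy$-components (using that $\gamma$ preserves, or swaps, the coordinate foliations), reattach the powers of $\lambda$ to recover the gauge relation for all $\lambda$, and integrate to get the constant left factor $\chi(\lambda)$, with the $x$-dependence of $K$ read off from the $\h_0$ line. Your "linearly independent slots" phrasing is just a repackaging of the paper's component-by-component matching.
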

\begin{proof} Let $\wL = (\FL)^{-1} d\FL,$
where $\lambda$ is treated as a constant.
From \eqref{ftlsys} we know that
$$\wL = (\alpha_0 + \alpha_1\lambda) dx + \beta_1 dy,$$
where $\alpha_0$ takes value in $\h_0$ and $\alpha_1,\beta_1$ in $\h_1$.

Differentiating each side of \eqref{FgammaR} gives
$$\gamma^* \w = K^{-1} \w K + K^{-1} dK,$$
where $\w$ simply denotes the value of $\wL$ when $\lambda=1$.
Taking the $\h_0$ and $\h_1$ parts of each side in this equation, we have
\begin{equation}\label{dtwineparts}
\begin{aligned}
\gamma^*(\alpha_0\,dx) &= K^{-1} \alpha_0 K\,dx + K^{-1} dK,\\
\gamma^*(\alpha_1\,dx + \beta_1 \,dy) &=K^{-1}( \alpha_1 \,dx + \beta_1 \,dy) K.
\end{aligned}
\end{equation}

\noindent
1. Assume that $\gamma$ preserves the set of $x$-coordinate curves.  Then
$\gamma^* dx$ is a multiple of $dx$, and the
second line in \eqref{dtwineparts} implies that
\begin{align*}
\gamma^*(\alpha_1\,dx) &= K^{-1}\alpha_1K \,dx,\\
\gamma^*(\beta_1 \,dy) &= K^{-1}\beta_1 K \,dy.
\end{align*}
Multiplying these by powers of $\lambda$ and combining with the first line in
\eqref{dtwineparts} gives
$$\gamma^*\wL = K^{-1} \wL K + K^{-1} dK.$$
The left-hand side is the Maurer-Cartan form of $\FL \circ \gamma$, while
the right-hand side is the Maurer-Cartan form of $\FL K$.  Then \eqref{Fgammachi}
follows by a standard theorem for maps into Lie groups (see, e.g., Theorem 10.18 in \cite{S}).
Note that we must assume that the domain $D$ is connected to use this.  The
$x$-dependence of $K$ follows from the first line of \eqref{dtwineparts}.

\medskip
\noindent
2. Instead, assume that $\gamma$ exchanges the $x$- and $y$-coordinate curves.  Then
the second line in \eqref{dtwineparts} implies that
\begin{align*}
\gamma^*(\alpha_1\,dx) &= K^{-1}\beta_1K \,dy,\\
\gamma^*(\beta_1 \,dy) &= K^{-1}\alpha_1 K \,dx.
\end{align*}
Multiplying these by powers of $\lambda$ and combining with the first line in
\eqref{dtwineparts} gives
$$\gamma^*\wL = K^{-1} \w^{1/\lambda} K + K^{-1} dK.$$
Now \eqref{Fgammachirev} follows from this by the same argument as above.
\end{proof}

We lift the equation \eqref{Fgammachi} up to $SU(2)$,
whereupon $K^{-1} dK$ takes value in the subalgebra $\h_0$ of diagonal
matrices in $\su(2)$.
Then the lift $U$ of $F^\lambda$ to $SU(2)$ satisfies
\begin{equation}\label{su2sysstretched}
\dfrac{\di U}{\di x} = \frac{\ri}2 U
\begin{bmatrix} -\phi_x & a(x)\lambda \\ a(x)\lambda & \phi_x \end{bmatrix} , \qquad
\dfrac{\di U}{\di y} = -\frac{\ri b(y)}{2\lambda} U
\begin{bmatrix}
0 & e^{\ri \phi} \\  e^{-\ri \phi} & 0 \end{bmatrix},
\end{equation}
where $a = |\di f/\di x|$ and $b = |\di f/\di y|$ are the speeds in the
$x$- and $y$-directions.  We will write the lifted version of \eqref{Fgammachi} as
\begin{equation}\label{Ugammachi}
U \circ \gamma = \chi\, U K
\end{equation}

\begin{prop}\label{SeppsPropA}
Assume that $SU(2)$-valued functions $U(x,y;\lambda)$, $K(x,y)$ and
$\chi(\lambda)$ satisfy \eqref{su2sysstretched} and \eqref{Ugammachi} on $D$.
Then $\gamma$ takes $x$-coordinate curves to $x$-coordinate curves and
$y$-coordinate curves to $y$-coordinate curves.
\end{prop}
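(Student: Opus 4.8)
The plan is to differentiate the symmetry relation \eqref{Ugammachi} and then exploit the fact that the Maurer--Cartan form of $U$ is graded by powers of $\lambda$ in a way that separates the $dx$ and $dy$ directions at the two extreme powers. Set $\w^\lambda = U^{-1}dU$. From \eqref{su2sysstretched} this form is
$$\w^\lambda = A_0\,dx + \lambda\,A_1\,dx + \lambda^{-1}B_1\,dy,$$
where $A_0$ is diagonal while
$$A_1 = \tfrac{\ri}2 a(x)\begin{bmatrix}0&1\\1&0\end{bmatrix},\qquad
B_1 = -\tfrac{\ri}{2}b(y)\begin{bmatrix}0 & e^{\ri\phi}\\ e^{-\ri\phi}&0\end{bmatrix}$$
are both nowhere-vanishing, since $a=|\di f/\di x|$ and $b=|\di f/\di y|$ are positive. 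The key structural point is that the top power $\lambda^{1}$ involves only $dx$ and the bottom power $\lambda^{-1}$ involves only $dy$.

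First I would compute the Maurer--Cartan form $(U\circ\gamma)^{-1}d(U\circ\gamma)$ on both sides of $U\circ\gamma = \chi\,U\,K$. Because $\chi$ is independent of $(x,y)$, this yields the gauge relation
$$\gamma^*\w^\lambda = K^{-1}\w^\lambda K + K^{-1}dK,$$
in which $K$ and $K^{-1}dK$ carry no $\lambda$. Comparing coefficients of $\lambda$, the right-hand side contributes $\lambda\,K^{-1}A_1K\,dx$ at order $\lambda^{1}$ and $\lambda^{-1}K^{-1}B_1K\,dy$ at order $\lambda^{-1}$, each a multiple of a single coordinate differential. Writing $\gamma=(\gamma^1,\gamma^2)$, so that $\gamma^*dx=\gamma^1_x\,dx+\gamma^1_y\,dy$ and $\gamma^*dy=\gamma^2_x\,dx+\gamma^2_y\,dy$, the left-hand side contributes $\lambda\,(A_1\circ\gamma)(\gamma^1_x\,dx+\gamma^1_y\,dy)$ and $\lambda^{-1}(B_1\circ\gamma)(\gamma^2_x\,dx+\gamma^2_y\,dy)$ at these same orders.

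Matching the $dy$-components at order $\lambda^{1}$ gives $(A_1\circ\gamma)\,\gamma^1_y=0$, and since $A_1\circ\gamma\neq0$ we get $\gamma^1_y\equiv0$; symmetrically, matching the $dx$-components at order $\lambda^{-1}$ gives $(B_1\circ\gamma)\,\gamma^2_x=0$, whence $\gamma^2_x\equiv0$. Thus $\gamma^1$ depends on $x$ alone and $\gamma^2$ on $y$ alone, which is exactly the assertion that $\gamma$ sends each $x$-coordinate curve $\{y=\mathrm{const}\}$ to an $x$-coordinate curve and likewise for $y$-curves. I expect the only delicate points to be organizational rather than substantive: one must notice that the conclusion reads off entirely from the extreme powers $\lambda^{\pm1}$, so the $\lambda^0$ equation---and hence the precise form of $K^{-1}dK$---is never needed, and one must use the nondegeneracy of $f$ to guarantee that $A_1$ and $B_1$ (i.e.\ the coordinate speeds $a,b$) do not vanish, since that nonvanishing is what licenses the conclusions $\gamma^1_y=0$ and $\gamma^2_x=0$.
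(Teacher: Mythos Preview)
Your argument is correct, but it follows a different route than the paper's own proof. The paper argues via the Birkhoff splitting: writing $U = U^Y_- V_+$ with $U^Y_-$ independent of $x$, substituting into \eqref{Ugammachi} to obtain $U^Y_- \circ \gamma = \chi\, U^Y_- W_+$ with $W_+\in\Lambda^+SU(2)$, and then differentiating in $x$ to get
$$\dfrac{\di \gamma_2}{\di x}\, (\eta_-^Y \circ \gamma_2)  = W_+^{-1} \dfrac{\di W_+}{\di x},$$
where the left side lies in strictly negative powers of $\lambda$ and the right side in non-negative powers, forcing $\di\gamma_2/\di x=0$; similarly for the other component. By contrast, you never split $U$ at all: you differentiate \eqref{Ugammachi} directly, use that $K$ carries no $\lambda$, and read off the vanishing of $\gamma^1_y$ and $\gamma^2_x$ from the extreme coefficients $\lambda^{\pm 1}$. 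Your approach is more elementary---it avoids invoking the Birkhoff decomposition entirely and is closer in spirit to the computation in Proposition~\ref{chiprop}---while the paper's approach has the advantage of setting up exactly the objects ($W_+$, $W_-$) that are reused in the subsequent Lemma~\ref{Getatransform} and Proposition~\ref{SeppsPropB}.
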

\begin{proof}  We apply a Birkhoff splitting to $U$, yielding
\begin{equation}\label{Ubirkhoffsplit}
U = U^X_+ V_- = U^Y_- V_+,
\end{equation}
where, by Prop. \ref{onlyXonlyY}, $U^X_+$ has no $y$-dependence and
$U^Y_-$ has no $x$-dependence.  Substituting the last splitting into
\eqref{Ugammachi} and removing $V_+$ factors from the left-hand side gives 
$$U^Y_- \circ \gamma = \chi U^Y_- W_+,\qquad 
\text{where } W_+ = V_+ K (V_+ \circ \gamma)^{-1}.
$$
Let $(\gamma_1(x,y), \gamma_2(x,y))$ be the components of $\gamma$.
Differentiating the last equation with respect to $x$ 
and canceling $U^Y_- \circ \gamma$ from each side gives
$$\dfrac{\di \gamma_2}{\di x}\, (\eta_- \circ \gamma_2)  = W_+^{-1} \dfrac{\di W_+}{\di x},$$
where $\eta_-$ is as in \eqref{duminusy}.  Thus, the left-hand side contains only negative
powers of $\lambda$, while the right-hand side contains no negative powers of $\lambda$.
Because $\eta_-$ does not vanish, we must have $\di\gamma_2/\di x=0$
(and also $\di W_+/\di x =0$).  A similar argument shows that $\di\gamma_1/\di y=0$.
\end{proof}

\subsection{Extended Frames and Equivariant Potentials}
In this section we consider the implications of the symmetry assumption
for potentials derived from splitting the extended frame.  As in \S\ref{generalizedsection} we will consider more general splittings of the form
\begin{equation}\label{Ugeneralsplit}
U = G^Y_\flat L_+ = G^X_\sharp L_-,
\end{equation}
where $G^X_\sharp$ and $G^Y_\flat$ are assumed to have the property that $\di G^X_\sharp / \di y=0$, $\di G^Y_\flat/ \di x=0$, and $L_\pm \in \Lambda^\pm SU(2)$.
As in \S\ref{generalizedsection}, let $\eta^X_\sharp$ and $\eta^Y_\flat$ be the Maurer-Cartan matrices of $G^X_\sharp$ and $G^Y_\flat$ respectively, 
defined by \eqref{MCgees}, which are assumed to have expansions of the form \eqref{genexpand}.

\begin{lem}\label{Getatransform}
Let $U(x,y;\lambda)$, $K(x,y)$ and
$\chi(\lambda)$ be as in Prop. \ref{SeppsPropA} and assume
we have splittings as in \eqref{Ugeneralsplit}.
Then these factors transform under
$\gamma$ according to
\begin{align}
\label{Gminusgamma}
G^Y_\flat \circ \gamma &= \chi\, G^Y_\flat W^Y_+, \\
\intertext{and}
\label{Gplusgamma}
G^X_\sharp \circ \gamma &= \chi\, G^X_\sharp  W^X_-,
\end{align}
where $W^X_- = L_- K (L_- \circ \gamma)^{-1}$ takes value in
$\Lambda^- SU(2)$ and has no $y$-dependence,
 and $W^Y_+ = L_+ K (L_+ \circ \gamma)^{-1}$ takes value in $\Lambda^+ SU(2)$ and has no $x$-dependence.
Then the generalized potentials of $G^X_\sharp$, $G^Y_\flat$ transform according to
\begin{align}\label{EtaTransformX}
(\eta^X_\sharp \circ \gamma) \dfrac{d \gamma_1}{dx} &= (W^X_-)^{-1} \eta^X_\sharp W^X_- + (W^X_-)^{-1} \dfrac{d W^X_-}{dx},\\ \label{EtaTransformY}
(\eta^Y_\flat \circ \gamma) \dfrac{d \gamma_2}{dy} &= (W^Y_+)^{-1} \eta^Y_\flat W^Y_+ + (W^Y_+)^{-1} \dfrac{d W^Y_+}{dy},
\end{align}
where $\gamma_1, \gamma_2$ are the components of $\gamma$.
\end{lem}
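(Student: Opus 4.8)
The plan is to read off the group-level relations \eqref{Gminusgamma} and \eqref{Gplusgamma} directly from \eqref{Ugammachi}, and then extract the structural properties of $W^X_-$ and $W^Y_+$ by playing two different formulas for each against one another. First I would substitute the splitting $U = G^X_\sharp L_-$ into $U\circ\gamma = \chi\,U K$ to obtain $(G^X_\sharp\circ\gamma)(L_-\circ\gamma) = \chi\,G^X_\sharp L_- K$, and then solve for $G^X_\sharp\circ\gamma$ by moving $(L_-\circ\gamma)^{-1}$ to the right. This gives \eqref{Gplusgamma} with $W^X_- = L_- K (L_-\circ\gamma)^{-1}$ essentially by definition; the same manipulation applied to $U = G^Y_\flat L_+$ yields \eqref{Gminusgamma} with $W^Y_+ = L_+ K (L_+\circ\gamma)^{-1}$.

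Next I would pin down where $W^X_-$ lives. For membership in $\Lambda^- SU(2)$ I use the defining product: $L_-$ and $L_-\circ\gamma$ lie in $\Lambda^-SU(2)$ (composition with $\gamma$ only relabels the point $(x,y)$ and does not touch $\lambda$), while $K$ is $\lambda$-independent and diagonal (its Maurer--Cartan form lies in $\h_0$, as noted after Prop.\ \ref{chiprop}), so $K$ satisfies the twisting condition \eqref{algloopdef} and lies in both $\Lambda^+SU(2)$ and $\Lambda^-SU(2)$; since $\Lambda^-SU(2)$ is a group, the product $W^X_-$ is again in $\Lambda^-SU(2)$. For the $y$-independence I use the other formula, $W^X_- = (G^X_\sharp)^{-1}\chi^{-1}(G^X_\sharp\circ\gamma)$, obtained by rearranging \eqref{Gplusgamma}: here $G^X_\sharp$ depends on $x$ alone, $\chi$ on $\lambda$ alone, and by Prop.\ \ref{SeppsPropA} the component $\gamma_1$ depends on $x$ alone, so $G^X_\sharp\circ\gamma = G^X_\sharp(\gamma_1(x))$ depends on $x$ alone; hence $W^X_-$ has no $y$-dependence. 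The mirror-image argument (with $\Lambda^+SU(2)$, $G^Y_\flat$, and $\gamma_2 = \gamma_2(y)$) shows $W^Y_+\in\Lambda^+SU(2)$ has no $x$-dependence.

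The transformation laws \eqref{EtaTransformX} and \eqref{EtaTransformY} then follow by computing the Maurer--Cartan matrix of $G^X_\sharp\circ\gamma$ in two ways. On one side, since $G^X_\sharp\circ\gamma = G^X_\sharp(\gamma_1(x))$, the chain rule together with the definition $\eta^X_\sharp = (G^X_\sharp)^{-1}\di G^X_\sharp/\di x$ in \eqref{MCgees} gives $(G^X_\sharp\circ\gamma)^{-1}\,\di(G^X_\sharp\circ\gamma)/\di x = (\eta^X_\sharp\circ\gamma)\,d\gamma_1/dx$. On the other side, differentiating the gauge form $G^X_\sharp\circ\gamma = \chi\,G^X_\sharp W^X_-$ in $x$ (using $\di\chi/\di x = 0$ and that $G^X_\sharp$, $W^X_-$ depend on $x$ alone) and forming the Maurer--Cartan matrix produces $(W^X_-)^{-1}\eta^X_\sharp W^X_- + (W^X_-)^{-1}\,dW^X_-/dx$, where the factor $\chi^{-1}\chi$ cancels. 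Equating the two expressions yields \eqref{EtaTransformX}; the same computation with $y$, $G^Y_\flat$, $\eta^Y_\flat$, $W^Y_+$ gives \eqref{EtaTransformY}.

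The only real subtlety, and the step I would be most careful about, is the bookkeeping that simultaneously establishes the two properties of $W^X_-$: its $\Lambda^-$-membership comes from the explicit product $L_- K(L_-\circ\gamma)^{-1}$, whereas its $y$-independence comes from the rearranged expression $(G^X_\sharp)^{-1}\chi^{-1}(G^X_\sharp\circ\gamma)$. Neither formula alone delivers both facts, and the argument hinges on the separation $\gamma = (\gamma_1(x), \gamma_2(y))$ supplied by Prop.\ \ref{SeppsPropA}. Once those two properties are in hand, the potential laws are a formal consequence of the standard device of computing a Maurer--Cartan form two ways.
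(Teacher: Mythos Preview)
Your proof is correct and follows essentially the same route as the paper's own (very terse) argument: substitute the splittings \eqref{Ugeneralsplit} into \eqref{Ugammachi} to obtain \eqref{Gminusgamma}, \eqref{Gplusgamma}, invoke Prop.~\ref{SeppsPropA} for the separation $\gamma=(\gamma_1(x),\gamma_2(y))$, and then differentiate. Your write-up in fact supplies the details the paper omits, namely the dual use of the two formulas for $W^X_-$ (the product $L_-K(L_-\circ\gamma)^{-1}$ for $\Lambda^-$-membership, and the rearranged $(G^X_\sharp)^{-1}\chi^{-1}(G^X_\sharp\circ\gamma)$ for $y$-independence).
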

\begin{proof} Substituting the splitting $U=G^Y_\flat L_+$ into \eqref{Ugammachi}
gives \eqref{Gminusgamma}, and we similarly deduce \eqref{Gplusgamma}.  
By Prop. \ref{SeppsPropA}, $\gamma$ preserves
$x$-coordinate and $y$-coordinate curves separately.
Differentiating
\eqref{Gminusgamma} and \eqref{Gplusgamma} then yields the last two equations.
\end{proof}

\begin{prop}\label{SeppsPropB}
Let $\gamma:D\to D$ separately preserve $x$- and $y$-coordinate lines.
Assume that matrices $G^X_\sharp(x,\lambda)$, $G^Y_\flat(y,\lambda)$ satisfy \eqref{Gminusgamma} and \eqref{Gplusgamma} for some $W^Y_+(y,\lambda)$ and $W^X_-(x,\lambda)$ in $\Lambda^+SU(2)$ and $\Lambda_- SU(2)$ respectively, and the same $\chi(\lambda)\in SU(2)$.  Then there exist $U, L_+, L_-$ satisfying \eqref{Ugeneralsplit}
and a $K(x,y) \in SU(2)$ such that \eqref{Ugammachi} is satisfied for every $\lambda$.
\end{prop}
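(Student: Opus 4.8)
The plan is to reverse the argument of Lemma \ref{Getatransform}: there one starts from the extended frame $U$ and its symmetry \eqref{Ugammachi} and extracts the transformation rules \eqref{Gplusgamma} and \eqref{Gminusgamma}, whereas here we are handed $G^X_\sharp$ and $G^Y_\flat$ obeying those rules and must manufacture $U$ and $K$. First I would build $U$ from the two potentials exactly as in the proof of Theorem \ref{surfpot}: form $(G^Y_\flat)^{-1} G^X_\sharp T^X$ with $T^X$ the diagonal normalizer used there, apply the Birkhoff decomposition to split it as $L_+ (L_-)^{-1}$, and set $U = G^X_\sharp T^X L_- = G^Y_\flat L_+$. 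Absorbing $T^X$ into the negative factor, this is a splitting of the form \eqref{Ugeneralsplit} with $L_\pm \in \Lambda^\pm SU(2)$, and Theorem \ref{surfpot} guarantees that $U$ is the extended frame of a ps-surface.

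With $U$ in hand I would set $K := (L_-)^{-1} W^X_- (L_- \circ \gamma)$, so that the entire content of the proposition becomes the assertion that this $K$ is independent of $\lambda$ and satisfies \eqref{Ugammachi}. The main computation is to evaluate $U \circ \gamma$ in two ways. Using $U = G^X_\sharp L_-$ together with \eqref{Gplusgamma} gives $U \circ \gamma = \chi\, G^X_\sharp W^X_- (L_- \circ \gamma)$, and since $G^X_\sharp = U (L_-)^{-1}$ this rewrites at once as $\chi U K$, which is exactly \eqref{Ugammachi} --- provided $K$ genuinely lies in $SU(2)$.

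Establishing that $K$ is $\lambda$-independent is the only real obstacle, and it is forced by comparing the two expressions for $U \circ \gamma$. Using $U = G^Y_\flat L_+$ and \eqref{Gminusgamma} gives the alternative $U \circ \gamma = \chi\, G^Y_\flat W^Y_+ (L_+ \circ \gamma)$; equating this with the first expression, substituting $G^X_\sharp = U (L_-)^{-1}$ and $G^Y_\flat = U (L_+)^{-1}$, and cancelling the common invertible factor $\chi U$ yields
$$(L_-)^{-1} W^X_- (L_- \circ \gamma) = (L_+)^{-1} W^Y_+ (L_+ \circ \gamma).$$
Since $\gamma$ preserves the $x$- and $y$-coordinate curves separately, composition with $\gamma$ leaves the loop type unchanged, so $L_\pm \circ \gamma \in \Lambda^\pm SU(2)$; hence the left-hand side lies in $\Lambda^- SU(2)$ and the right-hand side in $\Lambda^+ SU(2)$. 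Their common value $K$ therefore lies in $\Lambda^+ SU(2) \cap \Lambda^- SU(2) = SU(2)$, i.e. is independent of $\lambda$. I would stress that the hypothesis that the \emph{same} $\chi$ occurs in \eqref{Gplusgamma} and \eqref{Gminusgamma} is what makes this work, since it is precisely what allows $\chi$ to cancel and leaves a single $K(x,y)$ serving both splittings; reading off \eqref{Ugammachi} from the $X$-computation then finishes the proof.
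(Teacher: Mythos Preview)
Your argument is correct and is essentially the same as the paper's: both construct $U$ by Birkhoff-splitting $(G^Y_\flat)^{-1}G^X_\sharp$ and then compare the two expressions for $U\circ\gamma$ to force a single $\lambda$-independent $K$. The paper phrases the last step as ``non-uniqueness of the splitting up to a $\lambda$-free factor,'' while you phrase it as $\Lambda^+SU(2)\cap\Lambda^-SU(2)=SU(2)$; these are the same observation. Two minor remarks: the detour through $T^X$ is unnecessary here (the paper splits $(G^Y_\flat)^{-1}G^X_\sharp$ directly, and since you immediately absorb $T^X$ into $L_-$ nothing is gained), and your justification that $L_\pm\circ\gamma\in\Lambda^\pm SU(2)$ because $\gamma$ preserves coordinate lines is a non sequitur --- the loop type concerns only the $\lambda$-expansion, which $\gamma$ never touches, so this holds for any $\gamma$.
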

\begin{proof}
Apply a Birkhoff splitting to the product $(G^Y_\flat)^{-1} G^X_\sharp$, to find matrices $L_\pm(x,y,\lambda) \in \Lambda^\pm SU(2)$ such that
$$L_+ (L_-)^{-1} = (G^Y_\flat)^{-1} G^X_\sharp.$$
Composing with $\gamma$ and applying the intertwining relations \eqref{Gminusgamma}
and \eqref{Gplusgamma} yields
$$(L_+ \circ \gamma) (L_-\circ \gamma)^{-1} = W^X_+ L_+ (W^Y_- L_-)^{-1}.$$
Because we are not imposing that any factor lie in  $\Lambda^\pm_* SU(2)$, the splitting factors are not unique;  however the minus and plus factors on each side
must agree up to a multiple that is independent of $\lambda$.  Thus, we have
$$L_+ \circ \gamma = W^Y_+ L_+ K(x,y),
\qquad L_- \circ \gamma = W^X_- L_- K(x,y)$$
for some $K(x,y) \in SU(2)$.  From this it follows that $U = G^Y_\flat L_+=G^X_\sharp L_-$ satisfies \eqref{Ugammachi}.  (Moreover, because all loops are assumed to be twisted by the automorphism of $\su(2)$ that preserves $\h_0$, it follows that $K^{-1} dK$ takes value in $\h_0$.)
\end{proof}

Our starting point in constructing symmetric ps-surfaces will be to write down
potentials satisfying \eqref{EtaTransformX} and \eqref{EtaTransformY}. 
Note that these conditions mean that the pullbacks
under $\gamma$ of the 1-forms $\eta^X_\sharp dx$ and $\eta^Y_\flat dy$ are gauge-equivalent to themselves, under the gauges $W^X_-$ and $W^Y_+$ respectively.
It is easiest to satisfy these if the gauge matrices $W^Y_+$ and $W^X_-$ are constant, as in the following. 

\subsection{Important Example} \label{sec:example}

The general theory presented in this paper is a powerful tool for producing new classes of examples of ps-surfaces.  The ps-cone points arising naturally in several of these examples help to clarify both the theory of discrete ps-surfaces and the theory of Lorentz umbilic points.  In ``Part II" we present these and other special cases of symmetry.  Included among the special cases are all known examples and many new examples.  

Here we will not derive but simply give the input data for an important example with discrete rotational symmetry.  Let $C(x) = (x-\ri)/(x+\ri)$ denote
the Cayley transform, which maps the real line onto the unit circle.
We choose
$$\eta^X_\sharp = (\lambda+\lambda^{-1}) 
\begin{bmatrix} 0 & p(x) \\ -\overline{p(x)} & 0 \end{bmatrix},
\quad 
\eta^Y_\flat = (\lambda+\lambda^{-1}) 
\begin{bmatrix} 0 & p(y) \\ -\overline{p(y)} & 0 \end{bmatrix},
$$
where the function $p$ which appears in both matrices is defined
by 
$$p(t) = \dfrac{d}{dt}\left( \dfrac{Q(w)}w\right),\qquad 
\text{where }
Q(w)=w^3+w^{-3} \text{ and } w=C(t).$$
Then $(\eta^X_\sharp, \eta^Y_\flat)$ satisfies
the conditions \eqref{EtaTransformX}, \eqref{EtaTransformY} for
$$W^X_- = W^Y_+ = \begin{bmatrix} e^{\ri \pi/3} & 0 \\ 0 & e^{-\ri \pi/3}\end{bmatrix}$$
and maps $(\gamma_1, \gamma_2)$ of the $x$ and $y$ axes 
that each correspond under the Cayley transform
to rotation by $2\pi/3$ on the unit circle.


This choice of potential yields the ps-surface shown (from different viewpoints) in Figures 1 and 2.  The surface is symmetric under a rotation (through an angle of approximately $\pi/3$) around an axis which is perpendicular to the page.  The figures show only a portion of the ``bottom half'' of the surface, the  ``top half'' being given by  reflecting through the plane perpendicular to the axis.  This axis passes through a pseudospherical cone point.  This point itself is the image of a line in the domain and could be thought of as a degenerate curvature line.  It is noteworthy that every asymptotic curve (in both families) goes through the cone point. 

\begin{figure}
\includegraphics[scale=1]{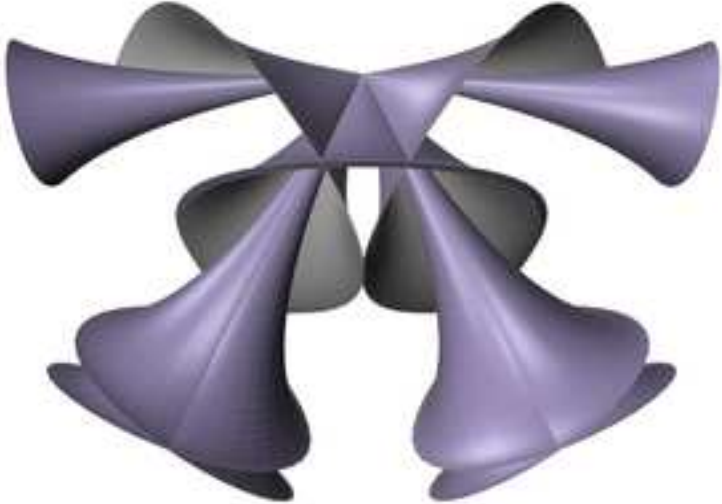}
\caption{Generalized Amsler Surface}
\label{fig:dis2}
\end{figure}

\begin{figure}
\includegraphics[scale=1]{}
\caption{Generalized Amsler Surface}
\label{fig:dis1}
\end{figure}

\subsection*{Acknowledgment}
We'd like to thank N. Schmitt for his help in understanding the theory, for the implementation of Toda's algorithm on the computer, and for his beautiful graphics.
\end{document}